\newcommand{\term}[1]{{\textbf{#1}}}
\newcommand{\dupeterm}{\term}
\theoremstyle{plain}
\newtheorem{introtheorem}{Theorem}
\newtheorem{introprop}[introtheorem]{Proposition}
\theoremstyle{plain}
\newtheorem{theo}{Theorem}[section]
\newtheorem{prop}[theo]{Proposition}
\newtheorem{lemma}[theo]{Lemma}
\newtheorem{coro}[theo]{Corollary}
\newtheorem{claim}[theo]{Claim}
\theoremstyle{definition}
\newtheorem{defi}[theo]{Definition}%[section]
\newtheorem{ques}[theo]{Question}
\newtheorem{example}[theo]{Example}
\newtheorem{remark}[theo]{Remark}
\newcommand*{\pgfunderrightarrow}{%
  \@ifstar
    {\let\ifpgf@depth\iftrue\mathpalette\@pgfunderrightarrow}
    {\let\ifpgf@depth\iffalse\mathpalette\@pgfunderrightarrow}%
}
\newcommand*{\@pgfunderrightarrow}[2]{%
  #2%
  \edef\pgf@math@fam{\the\fam}%
  \pgfpicture
    \pgfsetbaseline{0pt}%                 % "baseline = 0pt"
    \pgf@relevantforpicturesizefalse      % "overlay"
    \pgfsetroundcap                       % "line cap = round"
    \pgfsetarrowsend{to}%                 % "arrows = -to"
    \pgfutil@tempdima=0.28pt%
    \advance\pgfutil@tempdima by.8\pgflinewidth%
    \pgfutil@tempdima-4\pgfutil@tempdima
    \sbox\pgfutil@tempboxa{$\m@th\fam\pgf@math@fam#1#2$}%
    \advance\pgfutil@tempdima-\dp\pgfutil@tempboxa
    \pgfutil@tempdimb\wd\pgfutil@tempboxa
    \pgfpathmoveto{-\pgfqpoint{-5pt}{\pgfutil@tempdima}}%
    \pgfpathlineto{\pgfqpoint{0\pgfutil@tempdimb}{\pgfutil@tempdima}}%
    \pgfusepath{stroke}%                  % "draw"
    \ifpgf@depth
      \pgf@relevantforpicturesizetrue
      \pgfpathmoveto{\pgfqpoint{0pt}{-\pgfutil@tempdimb}}%
      \pgfusepath{use as bounding box}%
    \fi
  \endpgfpicture
}
\newcommand{\wt}{\widetilde}
\newcommand{\ov}{\overline}
\newcommand{\eps}{\varepsilon}
\newcommand{\NN}{\mathbb{N}} % Natural numbers
\newcommand{\ZZ}{\mathbb{Z}} % Integers
\newcommand{\QQ}{\mathbb{Q}} % Rationals
\newcommand{\RR}{\mathbb{R}} % Reals
\newcommand{\CC}{\mathbb{C}} % Complex numbers
\DeclareMathOperator{\mmod}{mod} % Modulo n, without preceding space.
\DeclareMathOperator{\Conv}{Conv} % Convex hull of a set in a real vector space.
\DeclareMathOperator{\ext}{ext} % Set of extremal points of a set.
\newcommand{\Sph}{\mathbb S} % Standard sphere
\newcommand{\DD}{\mathbb D} % Euclidean unit disk
\newcommand{\UU}{\mathbb{U}} % Half disk
\newcommand{\U}{\mathrm{T}^1} % Unit tangent bundle operator
\renewcommand{\H}{\mathrm{H}} % Homology
\newcommand{\HSgammaR}{\H_2({\U\Sigma}, {\vecgamma}; \RR)}
\newcommand{\HSgammaZ}{\H_2({\U\Sigma}, {\vecgamma}; \ZZ)}
\newcommand{\FF}{\mathcal{F}} % Foliation
\newcommand{\Fs}{\FF^s}       % Stable foliation
\newcommand{\Fu}{\FF^u}       % Unstable foliation
\newcommand{\fgeod}{\varphi_{\mathrm{geod}}} % Geodesic flow
\newcommand{\fgeodt}{(\fgeod^t)_{t\in\RR}}
\newcommand{\fxt}{(\varphi_X^t)_{t\in\RR}}
\newcommand{\ft}{(\varphi^t)_{t\in\RR}}
\newcommand{\Schw}[1]{\mathcal{S}chw_{#1}} % Cone of Schwartzman asymptotic cycles
\newcommand{\xTh}{{\bf{x}_M}}  % Thurston (semi)norm
\DeclareMathOperator{\Len}{Len} % Length of a curve (w.r.t. a multicurve)
\newcommand{\intnorm}{\bf x} % the intersection seminorm
\newcommand{\xx}{{\intnorm}_\gamma} % Intersection seminorm
\newcommand{\xxx}[1]{\xx(#1)}
\newcommand{\BBx}{B^*_{\xx}} % Dual unit ball of the intersection seminorm
\DeclareMathOperator{\Eul}{Eul} % Set of coorientations of a multicurve $\gamma$.
\newcommand{\BB}{S\!^{BB}} % Map from Eulerian coorientations to cross sections.
\newcommand{\Snu}{\BB(\eta)}
\newcommand{\Snnu}{\BB(-\eta)}
\newcommand{\Snuun}{\BB(\eta_1)}
\newcommand{\Snudeux}{\BB(\eta_2)}
\newcommand{\abas}{\pgfunderrightarrow{\alpha}}
\newcommand{\arev}{{\overset\leftarrow{\alpha}}}
\newcommand{\syma}{{\overset\leftrightarrow{\alpha}}}
\newcommand{\gbas}{\pgfunderrightarrow{\gamma}_i}
\newcommand{\vecgamma}{\accentset{\leftrightarrow}{\gamma}}
\newcommand{\symgamma}{{\overset\leftrightarrow{\gamma}}}
\newcommand{\OO}{\mathcal{O}}       % A certain orbifold
\newcommand{\Cnu}{{S^\times(\eta)}} % CW complex of coorientation \eta
\title[Intersection norms and Birkhoff cross sections]
{Intersection norms on surfaces\\
and Birkhoff sections for geodesic flows}
\author{Marcos Cossarini}
\author{Pierre Dehornoy}
\date{first version: April 2016; this version: November 2024}
\begin{document}
\maketitle
\vspace{-.5cm}

\begin{abstract}
Every filling multicurve on a smooth surface determines a norm on the first homology group of the surface.
The unit ball of the dual norm is the convex hull of finitely many integer points.
We give an interpretation of these points in terms of certain coorientations of the multicurve.
Our main result is a classification statement: when the surface is hyperbolic and the filling multicurve is geodesic, integer points in the interior of the unit ball of the dual norm classify isotopy classes of Birkhoff sections for the geodesic flow (on the unit tangent bundle to the surface) whose boundary is the symmetric lift of the multicurve.
All results remain true when one replaces the hyperbolic surface by a 2-dimensional orientable hyperbolic orbifold.
\end{abstract}

\section*{Introduction}

This paper deals with the topological study of non-singular flows on 3-manifolds.
With this goal, we study a family of norms on the first homology group of surfaces that may be of independent interest.

Given a smooth 3-manifold~$M$ and a smooth, non-vanishing vector field~$X$ on~$M$, we denote by~$\fxt$ the flow induced by~$X$ on~$M$.
An \dupeterm{embedded Birkhoff section} for $(M,\fxt)$ is a compact, oriented surface~$S$ with boundary, embedded in~$M$, whose interior is positively transverse to~$X$, whose boundary $\partial S$ is tangent to~$X$, and such that every orbit of~$\fxt$ intersects~$S$ in a uniformly bounded time.
On the topological side, an embedded Birkhoff section induces an open book decomposition of the underlying 3-manifold, where the binding is the boundary~$\partial S$, and the fibration of the complement over~$\Sph^1$ is given by an appropriate renormalisation of the flow.
On the dynamical side, when a flow admits an embedded Birkhoff section, its dynamics is encoded by the first-return
map on the section---a much simpler data.
Such a section can be very helpful for understanding some properties of the flow, like the existence or abundance of periodic orbits~\cite{birkhoff1913proof, franks1988generalizations}.

There are several existence results on Birkhoff sections for different classes of flows, for example geodesic flows~\cite{birkhoff1917dynamical, dehornoy2019almost}, Anosov flows~\cite{fried1983transitive}, or Reeb flows~\cite{hryniewicz2020note, hryniewicz2011existence, contreras2022existence, colin2022generic}, among others. On the other hand, as far as we know, there are very few situations in which \emph{all} Birkhoff sections are classified.
An exception is given by the Hopf flow on~$\Sph^3$ where the Birkhoff sections can be explicitely constructed~\cite{dehornoy2022vector}, and the geodesic flow on a flat torus~\cite{dehornoy2015geodesic}.
Our main goal is to provide such a classification, for the geodesic flow on the unit tangent bundle of a hyperbolic surface.

\begin{introtheorem}\label{T:Classification}
For $\Sigma$ a hyperbolic surface and $\gamma$ a finite collection of closed geodesics that fills~$\Sigma$, denote by~$\symgamma$ the symmetric lift of~$\gamma$ in~$\U\Sigma$.
Then there is a one-to-one correspondence between
\vspace{-1mm}
\begin{itemize}
\item[-] isotopy classes of embedded Birkhoff sections for the geodesic flow on the unit tangent bundle~$\U\Sigma$ bounded by the symmetric lift~$\vecgamma$ of~$\gamma$, with negative orientation,
\item[-] points satisfying a certain mod 2 condition in the open dual unit ball~$\BBx\subset\H^1(\Sigma,\ZZ)$ of the intersection norm~$\xx$ associated to~$\gamma$.
\end{itemize}
\end{introtheorem}

Let us mention that such a statement is not really a surprise: the fact that Birkhoff sections with a given boundary up to isotopy correspond to integral points inside certain polyhedrons follows from theorems of Schwartzman, Thurston and Fried, as we explain later in this introduction.
The main contribution of the paper lies in the explicit and combinatorial aspects of all constructions involved.

In the rest of the introduction, we first explain Theorem~\ref{T:Classification} by presenting intersection norms, their dual unit balls and the connection with Eulerian coorientations.
The one-to-one correspondence in Theorem~\ref{T:Classification} is made explicit using a construction we call \emph{Birkhoff--Brunella surfaces} and which is encapsulated in Proposition~\ref{T:Brunella}.
Then we put Theorem~\ref{T:Classification}  in perspective, by connecting it with Thurston--Fried's theory of fibered faces of the Thurston norm ball and with Schwartzman--Fuller--Fried--Sullivan's theory of global sections for flows.

\medskip

\subsection*{Intersection norms.}
Let $\Sigma$ be a smooth surface without boundary.
A \term{multicurve}\footnotemark on~$\Sigma$ is a proper,\\\noindent
\begin{minipage}[t]{0.75\textwidth}
smoothly
immersed 1-submanifold in~$\Sigma$, without boundary, and in general position (meaning that all multiple points are double points where the intersection is transverse).
On a compact surface, a multicurve consists of finitely many closed curves.
\end{minipage}
\begin{minipage}[t]{0.25\textwidth}
  \begin{picture}(0,0)
  \put(03,-15)  {\includegraphics[width=27mm]{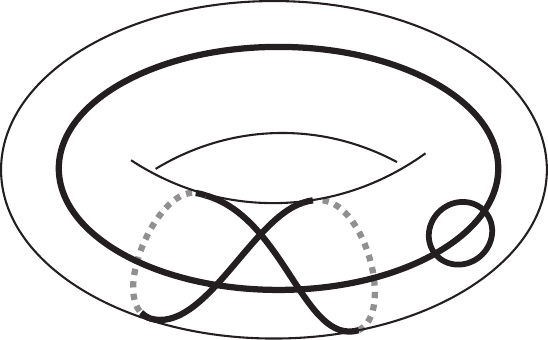}}
  \end{picture}
\end{minipage}
%\vspace{1mm}
\footnotetext{These are called \emph{divides} by Norbert A'Campo~\cite{acampo1998generic} who, along with Sabir M.~Gusein-Zade, studied divides on the disc in the context of singularities~\cite{acampo1975groupe, gusein-zade1974dynkin, husein-zade1977monodromy}.
They were later generalized to arbitrary surfaces by Masaharu Ishikawa~\cite{ishikawa2004tangent}.
This terminology is maybe not so common in the worlds of surface topologists or dynamists, so we use the more common term \emph{multicurve}.}

Let $\gamma$ be a fixed multicurve on a compact surface $\Sigma$.
We think of $\gamma$ as the discrete analogue of a Riemannian (or Finsler) metric; see \cite{cossarini2018discrete} for a precise connection.
The \term{length} of a generic path $\alpha$ with respect to $\gamma$, denoted $\Len_\gamma(\alpha)$, is defined as the number of crossings between $\alpha$ and $\gamma$, and the length of a homology class $a \in \H_1(\Sigma; \ZZ)$, denoted $\xx(a)$, is the minimal length of a generic integral 1-chain representing the class $a$; see Section~\ref{S:IntersectionNorm}.

Our first result was proven by Schrijver on the torus~\cite{schrijver1993graphs} and stated by Turaev without proof~\cite[Remark 1.9]{turaev2002norm}.

\begin{introprop}
\label{T:NORM}
Let $\Sigma$ be an oriented compact smooth surface and $\gamma$ a multicurve on~$\Sigma$.
Then the function~$\xx: \H_1(\Sigma; \ZZ) \to \NN$ is a symmetric seminorm, that is, it is
\begin{itemize}
\item positively homogeneous: $\xx(n \cdot a) = n\,\xx(a)$ for all $a \in \H_1(\Sigma; \ZZ)$ and $n \in \NN$,
\item subadditive: $\xx(a+b) \leq \xx(a) + \xx(b)$ for all $a,b \in \H_1(\Sigma; \ZZ)$.
\item symmetric: $\xx(-a)=\xx(a)$ for all $a \in \H_1(\Sigma; \ZZ)$,
\end{itemize}
Furthermore, if the multicurve~$\gamma$ is \term{filling} (i.e. it meets every noncontractible closed curve in $\Sigma$), then~$\xx$ is positive definite: $\xx(a) > 0$ if $a \neq 0$.
\end{introprop}

The function~$\xx$ is called the \dupeterm{intersection seminorm} (or \dupeterm{intersection norm} if it is positive definite) associated to~$\gamma$.

\begin{remark}\label{Rk:mod2}
This seminorm satisfies $\xx(a) \equiv [\gamma]_2(a) \mod 2 $ for each $a\in \H_1(\Sigma; \ZZ)$, where $[\gamma]_2$ is the $\ZZ_2$-cohomology class of the cochain that maps each generic smooth 1-chain $\alpha$ to its modulo 2 number of intersections with $\gamma$.
\end{remark}

By a theorem of Thurston~\cite{thurston1986norm}, any integer-valued seminorm $N$ on a \term{lattice}~$L$ (i.e. an abelian group isomorphic to $\ZZ^d$ for some $d \in \NN$) can be written in the form
$\displaystyle N(v) = \max_{\varphi \in F}\varphi(v)$ where $F$ is a finite family of group morphisms $L \to \ZZ$.
In fact, one can take as $F$ the dual unit ball of $N$, that is, the set
$B^*_N$ of homomorphisms $\varphi:L\to \ZZ$ that satisfy $\varphi(v)\leq N(v)$ for all $v \in L$.
Furthermore, if $N$ coincides modulo $m$ (for a certain integer $m \geq 1$) with a given homomorphism $\mu:L \to \ZZ_m$, then one can restrict $F$ to those functionals in $B^*_N$ that coincide with $\mu$ modulo $m$ (see Theorem~\ref{thm:thurston_congruent}).
A natural question is whether these homomorphisms have a nice interpretation in the case that $N$ is an intersection norm (with $m=2$ and $\mu=[\gamma]_2$).
The answer is positive, as we now explain.

Consider a fixed multicurve~$\gamma$ on a surface~$\Sigma$.
A \dupeterm{coorientation} of~$\gamma$ is a continuous transverse orientation defined on $\gamma$ except at the double points, where the coorientation is allowed to flip.
A coorientation $\eta$ induces a cochain $c_\eta$ which maps each generic piecewise-smooth path $\alpha$ in $\Sigma$ to the signed number of crossings of~$\alpha$ with $\gamma$, where the sign of each crossing is determined by $\eta$.
%We can think of $\eta$ as a discrete differential 1-form that has norm $\leq 1$ and the number $c_\eta(\alpha)$ as the integral of $\eta$ along a path $\alpha$.
The coorientation~$\eta$ is \dupeterm{Eulerian} if $c_\eta$ is a closed cochain, that is, if $c_\eta(\alpha)=0$ whenever $\alpha$ is a contractible closed curve.
(Equivalently, $\eta$ is Eulerian if around each double point $p$ of~$\gamma$, among the four fragments of $\gamma$ that meet at $p$ there are exactly two that are positively cooriented and two that are negatively cooriented.
See an example on Figure~\ref{F:Intro} left.)
It follows that an Eulerian coorientation $\eta$ induces an integral cohomology class~$[\eta] := [c_\eta] \in \H^1(\Sigma; \ZZ)$. 
Note that different Eulerian coorientations may yield the same cohomology class.

The next result was proven when~$\Sigma$ is a torus by Schrijver, using different methods~\cite[Thm 9]{schrijver1992circuits}.
It is illustrated on Figure~\ref{F:Intro}.

\begin{introtheorem}\label{T:Coor}
Let $\gamma$ be a multicurve on an orientable closed compact surface~$\Sigma$.
Then the cohomology classes in the closed dual unit ball $\overline{\BBx}$ that coincide modulo~2 with $[\gamma]_2$ are precisely the cohomology classes of the Eulerian coorientations of~$\gamma$.
Therefore, for every $a$ in~$\H_1(\Sigma; \ZZ)$ we have
\[\xx(a) = \max_{\substack{\eta~\mathrm{Eulerian}\\\mathrm{coorientation~of~}\gamma}} [\eta](a). \]
\end{introtheorem}

This result also gives an effective way for computing the norm~$\xx$, since it reduces the minimisation over an infinite number of curves to a maximisation over a finite number of coorientations.

Going back to the case where the multicurve $\gamma$ is a geodesic in a hyperbolic surface, Theorem~\ref{T:Classification} states that there is a correspondence between (certain) integral points in the interior of~$\BBx$ and Birkhoff sections for the geodesic flow, and Theorem~\ref{T:Coor} states that (certain) integral points in~$\overline{\BBx}$ can be represented by Eulerian coorientations.
The correspondence of Theorem~\ref{T:Classification} is made explicit by associating to every Eulerian coorientation a certain surface in~$\U\Sigma$, as in the following statement.
The superscript \emph{BB} stands for Birkhoff--Brunella.

\begin{introprop}\label{T:Brunella}
Let $\Sigma$ be a compact oriented surface with a Riemannian metric and~$\gamma$ a finite collection of closed geodesics on~$\Sigma$.
There is a canonical a map~$\BB$ that associates to every Eulerian coorientation~$\eta$ of~$\gamma$ an oriented surface~$\Snu$ in~$\U\Sigma$ whose interior is positively transverse to the geodesic flow and whose oriented boundary is~$-\vecgamma$ .
The Euler characteristic of~$\BB(\eta)$ is independent of~$\eta$ and equals minus twice the number of double points of~$\gamma$.

If two Eulerian coorientations~$\eta_1, \eta_2$ of~$\gamma$ are cohomologous and their common class lies in the interior of~$\BBx$, their interiors are isotopic along the flow. %with an isotopy that fixes their common boundary.
\end{introprop}

\medskip

\subsection*{Thurston norm balls, their fibered faces, and suspension flows.}
We now present Thurston's theory of norms and fibered faces for 3-manifolds.
This puts in perspective and explains Theorem~\ref{T:Classification} at an abstract level. %, so that the main input of our work lies in the elementary and explicit character of all constructions, in particular Theorem~\ref{T:Coor} and Proposition~\ref{T:Brunella}.

Given a compact 3-manifold~$M$ with toric boundary, its \emph{Thurston norm}~$\xTh$ is a function on the space $\H_2(M, \partial M; \RR)$ that encodes the minimal negative part of the Euler characteristic of embedded surfaces in~$M$ with boundary in~$\partial M$ in the considered homology class~\cite{thurston1986norm}.
It is a seminorm, and as such it is determined by its unit ball~$B_\xTh$.
The latter turns out to be a polyhedron, which is compact when~$M$ is atoroidal.
It is a topological invariant that is in general hard to compute~\cite{fried2015thurston, agol2020certifying}.

Intersection norms can be seen as 2-dimensional siblings of the Thurston norms since they are defined by minimizing a certain complexity measure over homology classes.
Their unit balls are also polyhedrons, but, unlike unit balls of Thurston norms, these can be easily computed using Theorem~\ref{T:Coor}.

The top-dimensional faces of Thurston norm balls are of two types, namely fibered and non-fibered.
A \emph{fibered face} is such that every integral point in the cone generated by the fibered face is the class of the fibers of a fibration of~$M$ over the circle.

Fried showed~\cite{fried1979fibrations} that every pseudo-Anosov flow~$\ft$ (see Section~\ref{S:Anosov} for a definition) on~$M$ that is tangent to~$\partial M$ and that admits a global cross section canonically determines a fibered face of~$B_\xTh$ as follows:
denote by~$D_\varphi$ the convex cone generated by the homology classes of the periodic orbits of~$\ft$ in~$\H_1(M;\RR)$.
This cone actually coincides with the cone over the set of Schwartzman asymptotic cycles~\cite{schwartzman1957asymptotic}.
The dual cone~$C_\varphi$ in~$\H_2(M, \partial M; \RR)$ is defined as those classes that pair positively with all of~$D_\varphi$.
It turns out that the integral classes in~$C_\varphi$ correspond exactly to the classes of the global sections to~$\ft$.
Therefore~$C_\varphi$ is exactly the cone over the interior of a fibered face of the Thurston norm ball.
The cones $D_\varphi$ and $C_\varphi$ are polyhedric, and Fried also gives an algorithm~\cite{fried1982geometry} for computing~$D_\varphi$ and~$C_\varphi$ starting from a Markov partition for~$\ft$.

The connection with Birkhoff sections can be made as follows:
assume that $\beta$ is a collection of periodic orbits of a flow~$\varphi$ in~$M$, one can blow up the link~$\beta$ and obtain a 3-manifold~$\ov{M\setminus\beta}$ with toric boundary~$\partial \ov{M\setminus\beta}$.
If~$\ft$ is of class $C^1$, then it extends to a non-singular flow~$(\varphi_\beta^t)_{t\in\RR}$ on~$\ov{M\setminus\beta}$.
If~$\ft$ was of Anosov or pseudo-Anosov type, then $(\varphi_\beta^t)_{t\in\RR}$ is pseudo-Anosov.
In this context a Birkhoff section for~$\ft$ with boundary in~$\beta$ extends to a global section for the flow $(\varphi_\beta^t)_{t\in\RR}$.
The discussion of the previous paragraph then implies that, if $\beta$ bounds a Birkhoff section, isotopy classes of Birkhoff sections whose boundary is in~$\beta$ are classified by integral points in a certain polyhedral cone~$C_{\varphi,\beta}$ in~$\H_2(\ov{M\setminus\beta}, \partial \ov{M\setminus\beta}; \RR)\simeq\H_2(M,\beta; \RR)$.

In the context of Theorem~\ref{T:Classification}, $M$ is the unit tangent bundle~$\U\Sigma$ to a hyperbolic surface~$\Sigma$, $\ft$ is the geodesic flow on~$\U\Sigma$, and~$\beta$ is the symmetric lift~$\vecgamma$ of a filling collection~$\gamma$ of geodesics on~$\Sigma$, see Section~\ref{S:Geodesic} for the definitions.
The set of Birkhoff sections for the geodesic flow bounded by~$\vecgamma$ is then the cone over a fibered face of the Thurston norm ball in~$\HSgammaZ$ that we denote by~$C_{\mathrm{geod}, \vecgamma}$.

In Theorem~\ref{T:Classification}, the assumption that the oriented boundary is exactly~$-\vecgamma$ (that is, every boundary component has multiplicity~$-1$) can be seen as a restriction on the homology class of the section: it has to lie in a certain affine subspace denoted by~$\partial^{-1}_{\vecgamma}(-1, \dots, -1)$ of~$\HSgammaR$.
This means that the Birkhoff sections we are interested in are enumerated by the intersection of the cone~$C_{\mathrm{geod}, \vecgamma}$ with the affine subspace~$\partial^{-1}_{\vecgamma}(-1, \dots, -1)$.
It turns out that a suitable choice of an origin identifies the latter with $\H_1(\Sigma; \RR)$, see Section~\ref{S:Affine}.
Under this identification, Theorem~\ref{T:Classification} can be summarized by the equality
\[C_{\mathrm{geod}, \vecgamma}\cap\partial^{-1}_{\vecgamma}(-1, \dots, -1) = \frac 12\BBx. \]
Our paper adds to this description the elementary and explicit characters of all the involved constructions.
Indeed, as far as we know, there is no other Anosov or pseudo-Anosov flow for which the set of global or Birkhoff cross sections admits such an explicit and combinatorial description.

\begin{figure}[h!]
\centering
\includegraphics[width=.75\textwidth]{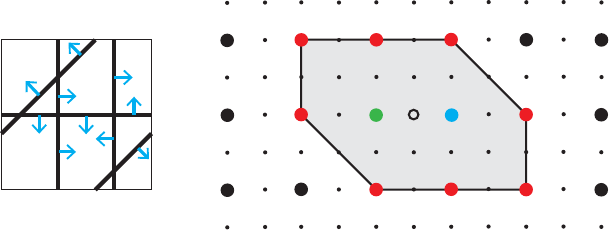}
\caption{Illustration of Theorems~\ref{T:Classification} and~\ref{T:Coor} in the case of $\Sigma$ a torus (with an abuse since Theorem~\ref{T:Classification} deals with higher-genus surfaces, whose homology has dimension $\ge 4$). On the left, a multicurve~$\gamma$ on~$\Sigma$ consisting of four geodesics, and an Eulerian coorientation (blue arrows).
Seen as a graph, $\gamma$ has 5 vertices and 10 edges.
On the right, the dual unit ball~$\BBx$ % \subseteq \H^1(\Sigma, \RR)$
of the associated intersection norm.
The empty circle denotes the origin.
The big dots denote those classes in~$\H^1(\Sigma; \ZZ)$ congruent to~$[\gamma]_2$ mod 2.
Among these classes, 10 (in blue, green and red) are in the dual unit ball~$\BBx$ and correspond to all cohomology classes of Eulerian coorientations of~$\gamma$ (Theorem~\ref{T:Coor}).
For example, the class corresponding to the blue coorientation is the blue point.
The blue and green points lie in the interior of~$\BBx$, hence describe two isotopy classes of Birkhoff cross sections for the geodesic flow %~$\fgeodt$
bounded by~$-\symgamma$.
If the genus of $\Sigma$ was at least 2, there would be no other isotopy class of Birkhoff cross section for the geodesic flow  (Theorem~\ref{T:Classification}).
The 8 red points are on the boundary of~$\BBx$ and correspond to classes of surfaces transverse to %~$\fgeodt$
the geodesic flow, but not intersecting every orbit, and bounded by~$-\symgamma$.
}
\label{F:Intro}
\end{figure}

\begin{remark}	
One may wonder how general Theorem~\ref{T:Classification} is, namely whether one can hope for an analog statement for any (transitive) Anosov flow.
As explained above, the set of Birkhoff sections up to isotopy fixing the boundary is described by the integral points inside a certain polyhedron.
However we do not know how to describe this polyhedron in general.
It seems to be related to linking numbers of periodic orbits of the flow~\cite{dehornoy2015geodesic, dehornoy2017which}.
However linking numbers are only defined for null-homologous links.
Ghys proved that Gauss linking forms describe all linking numbers between periodic orbits for a vector field in a homology sphere~\cite{ghys2009right}.
Moreover he showed how to use these Gauss forms to decide whether all finite collections of periodic orbits bound a Birkhoff sections (which he calls \emph{left-} or \emph{right-handed} flows).
Probably one should first extend the concept of Gauss linking forms to manifolds that are not rational homology spheres, and see how this helps defining linking of periodic orbits and more generally of invariant measures.
Then one could hope that these generalized linking describe exactly the homological information needed to apply Schwartzman's criterion, as we will do in Section~\ref{S:Birkhoff}.
\end{remark}

\begin{remark}
It may look strange to deal with Birkhoff cross sections with negative boundary and not with positive ones, \emph{i.e.,} with surfaces such that the orientation of the boundary inherited from the orientation of the surface (itself inherited from the coorientation of the interior surface by the flow) is opposed to the direction of the flow.
The reason is that there is actually no positive Birkhoff cross section for the geodesic flow, as explained in Théo Marty's thesis~\cite[Chap~3]{marty2021anosov}.
One could then look at \emph{mixed} sections, namely transverse surfaces some of whose boundary components are positively tangent to the geodesic flow and some others are negatively tangent.
There are more mixed sections than negative.
Alas, we have no analog of Proposition~\ref{T:Brunella} in this more general case, meaning that we do not have an elementary way to construct all mixed sections.
\end{remark}

\begin{remark}The case of the torus with a flat metric is not covered by Theorem~\ref{T:Classification}.
In this case, the fact that the unit tangent bundle~$\U\mathbb{T}^2$ is trivial allows to cut-and-glue horizontal tori to Birkhoff cross sections, so that there are infinitely many isotopy classes with a given boundary.
However, modulo this additional operation, there are still only finitely many classes.
These have been classified in a previous work by the second author~\cite[Thm 3.12]{dehornoy2015geodesic}.
The statement is similar, namely equivalence classes of Birkhoff sections are classified by points in the interior of a certain polygon with integral vertices.
The statement is even more general since, in this restricted case of the torus, there is no assumption that the boundary of the section is symmetric.
One could recover this earlier result in the symmetric case by a proof very similar to that of Theorem~\ref{T:Classification}.
\end{remark}

\medskip

\subsection*{Extension to 2-dimensional orbifolds.}
The results of this paper can be generalized in the following sense.
Instead of considering orientable surfaces only, one can consider orientable 2-dimensional orbifolds, as introduced by Thurston~\cite{thurston1980geometry}.
Such a 2-orbifold~$\OO$ is described by an orientable topological surface~$\Sigma_\OO$ and charts that are local homeomorphisms $\RR^2/(\ZZ/k\ZZ)\to\Sigma_\OO$, where $\ZZ/k\ZZ$ acts by rotation on~$\RR^2$.

There are several possible definitions for the homology of an orbifold that yield different spaces.
The one that is useful here is the most elementary: we define~$\H_i(\OO; \RR)$ to be the space $\H_i(\Sigma_\OO; \RR)$.
In this context the definition of intersection norms extends trivially.
Proposition~\ref{T:NORM} and Theorem~\ref{T:Coor} still hold.
Now the unit tangent bundle~$\U\OO$ is 3-manifold that is a Seifert fibered space over~$\Sigma_\OO$.
The geodesic flow is well defined on~$\U\OO$, and when $\OO$ is hyperbolic it is still of Anosov type.
Proposition~\ref{T:Brunella} extends directly in this context.
Concerning Theorem~\ref{T:Classification}, it has to be modified for taking into account orbifolds that are homology spheres ---a case that does not occur with hyperbolic surfaces.

\begin{introtheorem}\label{T:ClassificationBis}
Let $\OO$ be a hyperbolic orientable 2-dimensional orbifold.
Let $\gamma$ be a finite collection of closed geodesics on~$\OO$.

\begin{itemize}
\item
If $\Sigma_\OO$ is a sphere, then $\U\OO$ is a rational homology sphere.
In this situation, the link~$-\vecgamma$ bounds a Birkhoff section for the geodesic flow in~$\U\OO$ if and only if $\gamma$ is filling in~$\Sigma_\OO$.
In that case, the Birkhoff section is unique up to isotopy fixing the boundary.
\item
If $\Sigma_\OO$ is not a sphere and if $\gamma$ is filling, then the map~$[\eta]\mapsto \{\Snu\}$ is a one-to-one correspondence between integer points in the open unit ball~$\mathrm{int}(\BBx)$ congruent to~$[\gamma]_2\mathrm{~mod~}2$ and isotopy classes of Birkhoff cross sections for the geodesic flow in~$\U\Sigma$ with boundary~$-\vecgamma$.
\item
If $\Sigma_\OO$ is not a sphere and $\gamma$ is not filling, then there is no surface bounded by~$-\vecgamma$ and transverse to the geodesic flow.
\end{itemize}
\end{introtheorem}

A particular case is when $\OO$ is a hyperbolic triangular orbifold, that is, a sphere with three conic points.
In this case every collection~$\gamma$ of closed geodesics is filling, hence its lift~$\vecgamma$ bounds a Birkhoff section.
This is a particular case of the main result of~\cite{dehornoy2017which} which proves that in this case every finite collection of periodic orbits (even non-symmetric) bounds a Birkhoff section for the geodesic flow.

\medskip

\subsection*{Acknowledgments.}
Pierre D thanks \'Etienne Ghys and Adrien Boulanger for related discussions, and Elena Kudryavtseva who initiated this article by asking several questions about Birkhoff sections.
The authors thank the anonymous referees for useful suggestions, in particular the extension to orbifolds.

\setcounter{tocdepth}{2}
\tableofcontents

\section{Intersection norms and proof of Proposition~\ref{T:NORM}}
\label{S:IntersectionNorm}

In the whole section we fix an oriented compact smooth surface~$\Sigma$ with empty boundary and a multicurve~$\gamma$ on~$\Sigma$.
(Recall that a multicurve in $\Sigma$ is a compact, closed 1-manifold that is smoothly immersed in $\Sigma$, self-transverse, and has no points of multiplicity $>2$).

\begin{defi}\label{D:Paths}
A \term{path} in $\Sigma$ is a continuous function $\alpha: I \to \Sigma$ (where $I\subseteq \RR$ is a compact interval), considered up to a uniform shift in the parametrization,
% Two paths %$\alpha:I \to \Sigma$ and $\alpha':I' \to \Sigma$
% are considered equivalent if they differ in a uniform parameter shift %, i.e. if there exists $c \in \RR$ such that $I' = I + c$, and $\alpha'(t+c)=\alpha(t)$ for all $t \in I$.
so that the concatenation $\alpha \beta$ of two consecutive paths $\alpha,\beta$ is well defined.
% However, to lighten the notation we will write just $\alpha$ to refer to the equivalence class of $(I,\alpha)$.
% A \term{path} in $\Sigma$ is an equivalence class of pairs $(I,\alpha)$ where $I\subseteq \RR$ is a compact interval and $\alpha$ is a continuous function $I \to \RR$, and two pairs $(I,\alpha)$, $(I',\alpha')$ are considered equivalent if they differ only in a parameter shift, i.e. if there exists $c \in \RR$ such that $I' = I + c$, and $\alpha'(t+c)=\alpha(t)$ for all $t \in I$.
% This consideration is needed to ensure that the concatenation $\alpha\beta$ of two consecutive paths $\alpha,\beta$ is well defined.
% However, to lighten the notation we will write just $\alpha$ to refer to the equivalence class of $(I,\alpha)$.
The \term{reverse} of a path $\alpha$ is the path %$\alpha^\dag$ defined by
$\alpha^\dag(t) = \alpha(-t)$.
The \term{trivial path} at a point $p \in \Sigma$ is denoted $1_p$. %, is a path whose domain contains a single point and whose image is the set $\{p\}$.
\end{defi}

\begin{defi}\label{D:Length}
A smooth path in $\Sigma$ is \term{generic} (with respect to $\gamma$) if it has no endpoint on $\gamma$, it is transverse to $\gamma$, and it avoids the double points of $\gamma$.
We denote by~$P_\gamma$ the set of \term{generic piecewise-smooth paths}, obtained by concatenating finitely many generic smooth paths.
The \term{length} with respect to $\gamma$ of a path $\alpha \in P_\gamma$ is the number of times that it meets $\gamma$,
\[ \Len_\gamma(\alpha) = | \alpha^{-1}(\gamma) |. \]
\end{defi}

\begin{defi}\label{D:Norm}
A \term{generic integral 1-chain} is a linear combination $\alpha = \sum_i c_i \alpha_i$ of paths $\alpha_i \in P_\gamma$ with integer coefficients $c_i \in \ZZ$.
Its \term{length} is defined as
\[ \Len_\gamma(\alpha) = \sum_i |c_i| \, \Len_\gamma(\alpha_i). \]
Note that every homology class~$a$ in~$\H_1(\Sigma; \ZZ)$ may be represented by a generic integral 1-chain.
The length of the homology class~$a$ is defined as
\[ \xx(a) = \min_{\substack{\alpha\text{ closed generic integral} \\\text{1-chain such that }[\alpha] = a} } \Len_\gamma( \alpha ). \]
A closed generic 1-chain that minimizes length in its homology class is called an~$\xx$-\term{realizing 1-chain}.
The function~$\xx:\H_1(\Sigma; \ZZ) \to \NN$ is called the \term{intersection seminorm} (or \term{intersection norm}, if it is positive definite) associated to~$\gamma$.
\end{defi}

\begin{figure}
\centering
\def\svgwidth{.8\linewidth}
%% Creator: Inkscape 1.3.1 (1:1.3.1+202311172155+91b66b0783), www.inkscape.org
%% PDF/EPS/PS + LaTeX output extension by Johan Engelen, 2010
%% Accompanies image file '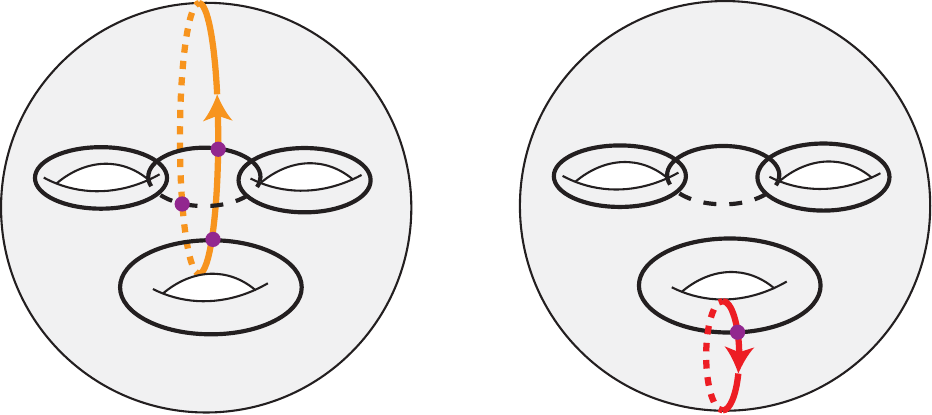' (pdf, eps, ps)
%%
%% To include the image in your LaTeX document, write
%%   \input{<filename>.pdf_tex}
%%  instead of
%%   \includegraphics{<filename>.pdf}
%% To scale the image, write
%%   \def\svgwidth{<desired width>}
%%   \input{<filename>.pdf_tex}
%%  instead of
%%   \includegraphics[width=<desired width>]{<filename>.pdf}
%%
%% Images with a different path to the parent latex file can
%% be accessed with the `import' package (which may need to be
%% installed) using
%%   \usepackage{import}
%% in the preamble, and then including the image with
%%   \import{<path to file>}{<filename>.pdf_tex}
%% Alternatively, one can specify
%%   \graphicspath{{<path to file>/}}
%% 
%% For more information, please see info/svg-inkscape on CTAN:
%%   http://tug.ctan.org/tex-archive/info/svg-inkscape
%%
\begingroup%
  \makeatletter%
  \providecommand\color[2][]{%
    \errmessage{(Inkscape) Color is used for the text in Inkscape, but the package 'color.sty' is not loaded}%
    \renewcommand\color[2][]{}%
  }%
  \providecommand\transparent[1]{%
    \errmessage{(Inkscape) Transparency is used (non-zero) for the text in Inkscape, but the package 'transparent.sty' is not loaded}%
    \renewcommand\transparent[1]{}%
  }%
  \providecommand\rotatebox[2]{#2}%
  \newcommand*\fsize{\dimexpr\f@size pt\relax}%
  \newcommand*\lineheight[1]{\fontsize{\fsize}{#1\fsize}\selectfont}%
  \ifx\svgwidth\undefined%
    \setlength{\unitlength}{446.00001526bp}%
    \ifx\svgscale\undefined%
      \relax%
    \else%
      \setlength{\unitlength}{\unitlength * \real{\svgscale}}%
    \fi%
  \else%
    \setlength{\unitlength}{\svgwidth}%
  \fi%
  \global\let\svgwidth\undefined%
  \global\let\svgscale\undefined%
  \makeatother%
  \begin{picture}(1,0.44581387)%
    \lineheight{1}%
    \setlength\tabcolsep{0pt}%
    \put(0,0){\includegraphics[width=\unitlength,page=1]{Intersection3.pdf}}%
    \put(0.24023557,0.366923){\color[rgb]{0,0,0}\makebox(0,0)[lt]{\lineheight{1.25}\smash{\begin{tabular}[t]{l}$\alpha_1$\end{tabular}}}}%
    \put(0.80228528,0.03622544){\color[rgb]{0,0,0}\makebox(0,0)[lt]{\lineheight{1.25}\smash{\begin{tabular}[t]{l}$\alpha_2$\end{tabular}}}}%
  \end{picture}%
\endgroup%

\caption{\small A genus~3 surface with a multicurve~$\gamma$ made of four closed curves (black).
On the left the curve~$\alpha_1$ (orange and bold) is transverse to~$\gamma$ and intersects it three times.
On the right $\alpha_2$ (red) is homologous to~$\alpha_1$ since their difference bounds a subsurface, namely the right hemisurface.
The curve~$\alpha_2$ intersects~$\gamma$ only once.
This number cannot be reduced to~$0$ in the same homology class, hence $\alpha_2$ is $\xx$-realizing and we have~$\xxx{[\alpha_1]}=\xxx{[\alpha_2]}=|\{\alpha_2^{-1}(\gamma)\}|=1.$}
\label{F:IntGenre3}
\end{figure}

The function~$\xx$ has three properties that make it a seminorm, namely it is positively homogeneous, subadditive and symmetric.
To prove the first point we need two facts about curves on surfaces.
Note first that every homology class $a \in \H_1(\Sigma; \ZZ)$ can be represented by an oriented multicurve.
A multicurve $\alpha$ is~\term{simple} if it has no double points, and is generic (with respect to $\gamma$) if it is transverse to $\gamma$ and the union $\alpha \cup \gamma$ is a multicurve. (The last condition holds if and only if each of the two multicurves $\alpha,\gamma$ avoids the double points of the other one.)

\begin{lemma}[Simplification]\label{L:Simplification}
Every homology class~$a$ in~$\H_1(\Sigma; \ZZ)$ can be represented by a simple oriented multicurve that is generic with respect to $\gamma$ and $\xx$-realizing.
\end{lemma}

\begin{proof}
Let $\alpha$ be an oriented multicurve that represents the class $a$ and is generic with respect to $\gamma$ and $\xx$-realizing.
To make $\alpha$ simple, we eliminate each self-crossing of $\alpha$ by performing a local modification of the form $\vcenter{\hbox{\includegraphics[width=4em]{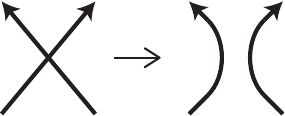}}}$.
\end{proof}

\begin{lemma}[Partitioning]\label{L:Partitioning}
Every simple oriented multicurve in $\Sigma$ of homology class $n \cdot a$
(for some $a \in \H_1(\Sigma; \ZZ)$ and $n \in \NN_{\neq 0}$)
is a union of $n$ disjoint simple oriented multicurves, each of class $a$.
\end{lemma}

\begin{proof} Let~$\beta$ be a simple oriented multicurve of homology class~$n\cdot a$.
Since $\beta$ is of class $n\cdot a$, its algebraic number of crossings with any generic oriented loop is a multiple of $n$.
Therefore we can label the regions (i.e. connected components) of~$\Sigma\setminus\beta$ with integers modulo $n$ in such a way that the label increases by~1 when one crosses~$\beta$ positively (i.e. from right to left).
% To do so, we choose an arbitrary point $p_0 \in \Sigma \setminus \beta$, and then label each region $R$ with the modulo $n$ algebraic number of intersections with $\beta$ of any generic path from $p_0$ to $R$.
% Two such paths must yield the same label because their difference is a closed chain, whose algebraic intersection number with $\beta$ is a multiple of $n$.
% From this labelling 
For every $i\in\ZZ/n\ZZ$, denote by~$\alpha_i$ the union of those components of $\beta$ that have regions labelled~$i$ on their right, and regions labelled~$i+1$ on their left. 
Every~$\alpha_i$ is a simple multicurve, and by conctruction~$\beta$ is the union of all of them.
Any two curves $\alpha_i$, $\alpha_j$ are homologous since $\alpha_i-\alpha_j$ bounds a subsurface of~$\Sigma$ (namely, the part with labels in $[i,j)$).
This implies that $[\beta] = n \cdot [\alpha_i]$ for every $i$, and since $\H_1(\Sigma; \ZZ)$ has no torsion, we conclude that $[\alpha_i]=a$.
\end{proof}

Now let us show that $\xx$ is a seminorm.
The symmetry property $\xx(-a)=\xx(a)$ is evident since the number of intersections does not change by reversing the orientation of a curve.
We have to prove positive homogeneity and subadditivity.

\begin{lemma}[Positive homogenity]\label{L:Positive_homogeneity}
For every~$a$ in~$\H_1(\Sigma; \ZZ)$ and for all $n\in\NN$ one has
\[ \xxx{n \cdot a} = n \,\xxx{a}. \]
\end{lemma}

\begin{proof}
Given~$a \in \H_1(\Sigma; \ZZ)$ and $n\in \NN$, consider a realizing multicurve $\alpha$ in~$a$.
Since $n$ parallel copies of~$\alpha$ intersect~$\gamma$ at $n\,\xxx{a}$~points, we have~$\xxx{n\cdot a}\le n\,\xxx{a}$.
For the reverse inequality, consider an $\xx$-realizing multicurve~$\beta$ of homology class~$n\cdot a$.
By simplification (Lemma~\ref{L:Simplification}) we can suppose~$\beta$ simple, and then it follows by partitioning (Lemma~\ref{L:Partitioning}) that~$\beta$ is the union of $n$ multicurves $\alpha_i$ of class $a$.
Each multicurve~$\alpha_i$ has at least $\xxx{a}$ intersections with $\gamma$, which implies that $\beta$ has at least $n \, \xxx{a}$ intersections with~$\gamma$, proving the inequality~$\xxx{n\cdot a}\geq n\,\xxx{a}$.
\end{proof}

\begin{lemma}[Subadditivity]\label{L:Subadditivity}
For every~$a, b$ in~$\H_1(\Sigma; \ZZ)$ one has
\[\xxx{a+b}\le\xxx{a}+\xxx{b}.\]
\end{lemma}

\begin{proof}
The union of two multicurves that realize~$\xxx{a}$ and $\xxx{b}$ crosses~$\gamma$ in~$\xxx{a}+\xxx{b}$ points, giving $\xxx{a+b}\le\xxx{a}+\xxx{b}$.
\end{proof}

This finishes the proof of Proposition~\ref{T:NORM} which states that the function~$\xx$ is a seminorm on $\H_1(\Sigma; \ZZ)$.

\begin{remark}
One can easily extend the notion of intersection norm to a surface with boundary $\Sigma$, by allowing the multicurves $\gamma$ to contain arcs with endpoints on~$\partial \Sigma$ (as did A'Campo~\cite{acampo1998generic, acampo1975groupe}).
One then obtains two norms on~$\H_1(\Sigma; \ZZ)$ and $\H_1(\Sigma, \partial\Sigma; \ZZ)$, depending on whether one considers absolute or relative homology classes.
Proposition~\ref{T:NORM} also holds in the second context.
\end{remark}

\begin{remark}\label{R:StableNorm}
One can wonder how the intersection norms compare with other known norms on the first homology of a surface.
For example, the \emph{stable norm}~${\bf x}_g$ induced by a metric $g$, is defined by $\displaystyle{{\bf x}_g(a)=\liminf_{n\to\infty}\min_{\alpha^{(n)}\in na}{g(\alpha^{(n)})}/{n}}$.
On a surface the stabilisation is not necessary, so that one has $\displaystyle{{\bf x}_g(a)=\min_{\alpha\in a}g(\alpha)}$.
One can check that if $(\gamma_k)_{k\in\NN}$ is a sequence of filling geodesics that approximates~$g$, meaning that the sequence of invariant measures on~$\U\Sigma$ that are concentrated on the lift~$\vec\gamma_k$ tends in the weak-* sense to the Liouville measure defined by~$g$ on~$\U\Sigma$, then the rescaled norms~$\frac1{g(\gamma_k)}{\bf x}_{\gamma_n}$ tend to the stable norm of~$g$.
Equivalently, the rescaled unit balls~$g(\gamma_k) B_{{\bf x}_{\gamma_k}}$ tend to the unit ball of the stable norm.
\end{remark}

\section{Unit balls and coorientations}
\label{S:Coor}

The context remains the same as in the previous section: we fix an oriented closed compact smooth surface~$\Sigma$ of genus at least~$1$ and a multicurve~$\gamma$ on it.
We have shown that the intersection norm~$\xx$ is an integer-valued seminorm on the lattice $\H_1(\Sigma; \ZZ) \simeq \ZZ^{2g}$.
By Remark~\ref{Rk:mod2} it coincides modulo 2 with $[\gamma]_2$.
Therefore we may apply the following result of Thurston (as extended in Section~\ref{S:integral_seminorms}).
Recall that a \dupeterm{lattice} $L$ is a finitely generated free abelian group.
Its \dupeterm{dual lattice} $L^*$ is the group of homomorphisms $L \to \ZZ$.
Note that $L \simeq L^* \simeq \ZZ^d$ for some $d \in \NN$.

\begin{theo}[Thm. 2 of \cite{thurston1986norm} and Theorem~\ref{thm:thurston_congruent}]\label{T:Thurston}
Every integral seminorm $N$ on a lattice $L$ is of the form
\[ N(v) = \max_{\varphi \in B^*_N} \varphi(v), \]
where $B^*_N \subseteq L^*$ is the \dupeterm{dual unit ball} of $N$, that is the (finite) set of group homomorphisms $\varphi:L \to \ZZ$ that satisfy $\varphi(v) \leq N(v)$ for all $v \in L$.
Furthermore, if $N$ coincides modulo a certain integer $m>1$ with a given homomorphism $\mu : L \to \ZZ_m$, then we have
\[ N(v) = \max_{\substack{\varphi \in B^*_N\\ \varphi_{\mmod m} = \mu }} \varphi(v). \]
\end{theo}

Our goal in this section is to prove Theorem~\ref{T:Coor}, that is, to characterize the points of $\overline{\BBx}$ that coincide modulo 2 with $[\gamma]_2$.
Specifically, we will show that these cohomology classes are precisely those that can be represented by Eulerian coorientations.
We will do so as follows.

\begin{figure}
\centering
\includegraphics[width=.7\textwidth]{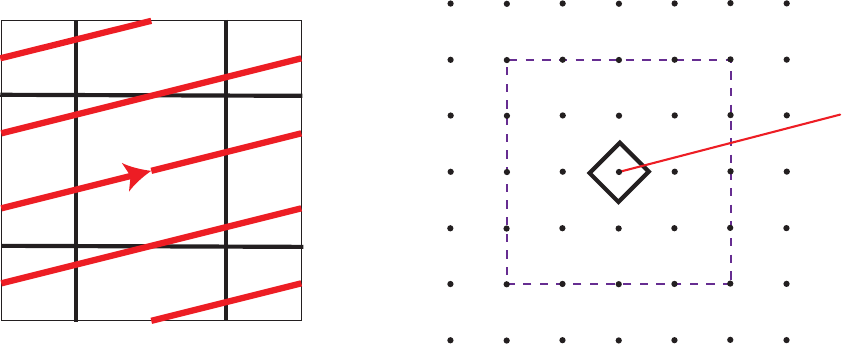}
\caption{\small A torus with a collection~$\gamma$ (black) made of four curves, two vertical and two horizontal.
The curve~$\alpha$ (red and bold) intersects~$\gamma$ in 10 points.
It is the smallest number for a curve whose homology class is~$(4,1)$, so that $\xx(4,1)=10$.
The norm~$\xx$ is actually given by~$\xxx{(p,q)}=2|p|+2|q|$ in the canonical coordinates.}
\label{F:IntTore}
\end{figure}

Recall, from the introduction, that a \dupeterm{coorientation} of~$\gamma$ is a continuous transverse orientation defined on $\gamma$ except at the double points, where the coorientation is allowed to flip.
A coorientation determines a 1-cochain $c_\eta$ which maps each generic piecewise-smooth path $\alpha$ in $\Sigma$ to the signed number of crossings of $\alpha$ with~$\gamma$, where the sign of each crossing is determined by $\eta$.
This cochain clearly satisfies \[
c_\eta(\alpha) \leq   \Len_\gamma(\alpha) \quad\text{and}\quad
c_\eta(\alpha) \stackrel{\mmod 2}{\equiv} \Len_\gamma(\alpha)
\quad\text{for each generic path $\alpha$.} \]
If we think of the multicurve $\gamma$ as a discrete metric, then we can see a coorientation~$\eta$ as a discrete field of unit-norm covectors, and the number $c_\eta(\alpha)$ as the value of the integral of $\eta$ along the path $\alpha$.

A coorientation $\eta$ is \dupeterm{Eulerian} if $c_\eta$ is a closed cochain, that is, if $c_\eta(\alpha)=0$ whenever $\alpha$ is a contractible closed curve.
In this case, the coorientation $\eta$ defines a cohomology class 
$[\eta] := [c_\eta] \in \H^1(\Sigma; \ZZ)$.
This cohomology class $h = [\eta]$ satisfies the properties
\[ h(a) \leq \xx(a) \quad\text{and}\quad
h(a) \stackrel{\mmod 2}{\equiv} \xx(a)
\quad\text{for all } a \in \H_1(\Sigma;\ZZ),\]
and we say then that $h$ is $\gamma$-\term{special}.

To go backwards, from a $\gamma$-special cohomology class $h$ to a coorientation $\eta$ such that $h=[\eta]$, we will rely on an auxiliary object called an \emph{eikonal function}.
An \term{eikonal function} on a surface-with-a-multicurve $(\Sigma,\gamma)$ is a function $f: \Sigma \setminus \gamma$ that satisfies
\begin{equation}\label{eq:eikonal_intro}
| f(y) - f(x) | \leq d_\gamma(x,y)
\quad \text{and} \quad
f(y) - f(x) \stackrel{\mmod 2}{\equiv} d(x,y)
\quad\text{for all } x,y \in \Sigma\setminus\gamma.
\end{equation}
If we think of the multicurve $\gamma$ as a discrete metric, and we see (Eulerian) coorientations as (closed) unitary 1-forms, then we should see eikonal functions as scalar-valued functions that are nonexpansive (or 1-Lipschitz).
We can differentiate an eikonal function to obtain an Eulerian coorientation, and reciprocally, on a simply connected surface, we can integrate an Eulerian coorientation to obtain an eikonal function.

We will use eikonal functions as follows.
Let $\left(\wt\Sigma,\pi\right)$ be the universal cover of $\Sigma$, and let $x_0 \in \Sigma$ be a fixed, arbitrary point.
The surface $\wt\Sigma$ has a multicurve $\wt\gamma = \pi^*\gamma$ (the pullback of $\gamma$ by the covering map $\pi$).
An Eulerian coorientation $\eta$ determines an eikonal function $f_\eta$ on $\wt\Sigma \setminus \wt\gamma$, called the \emph{primitive} of $\eta$, by the formula
\[ f_\eta(x) = c_\eta(\pi \circ \alpha_{x_0,x}),\]
where $\alpha_{x_0,x}$ is a generic path in $\wt\Sigma$ from $x_0$ to $x$.
We note that $f_\eta$ is equivariant with respect to the cohomology class $h=[\eta]$, which means that
\[ f(T_\beta(x)) - f(x) = h[\beta], \]
for any points $x \in \wt \Sigma$, and any loop homotopy class $\{\beta\} \in \Pi_1(\Sigma,\pi(x_0))$, where $T_\beta$ is the automorphism of $\wt\Sigma$ induced by the curve $\beta$.

Moreover, this process can be reversed: any $h$-equivariant eikonal function $f$ can be differentiated to obtain a coorientation $\eta$ of cohomology class $h$, such that $f_\eta = f$.
Therefore, to go backwards, from a $\gamma$-special cohomology class $h$ to a coorientation $\eta$ such that $[\eta]=h$, we do as follows.
We define first an $h$-equivariant function $f: \pi^{-1}(p_0) \to \ZZ$, where $p_0 = \pi(x_0) \in \Sigma$.
We show that~$\wt f$ is pre-eikonal (i.e. it satisfies \eqref{eq:eikonal_intro}, even thought it is not defined at all points) since $h$ is $\gamma$-special.
Finally, we show that any pre-eikonal funcion can be naturally extended, using a standard formula, to an eikonal function $\ov f$ defined on the whole space.
Moreover, this extended function $\ov f$ is $h$-equivariant if $f$ is so.
Differentiating the eikonal funcion $\ov f$ we obtain the coorientation $\eta$ such that $f_\eta = \ov f$, and therefore $[\eta] = h$.

\subsection{Coorientations of multicurves}

Recall that~$\gamma$ is a multicurve in $\Sigma$.
\begin{figure}
\centering
\begin{picture}(45,40)
\put(0,0){\includegraphics[width=40mm]{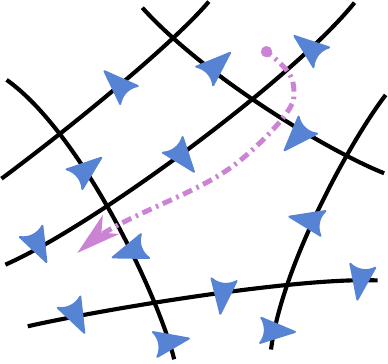}}
\put(23,17){$\alpha$}
\end{picture}
\caption{\small A piece of a multicurve~$\gamma$ (black).
A coorientation~$\eta$ of~$\gamma$ is indicated with blue arrows.
A path~$\alpha$ transverse to~$\gamma$ is shown (purple and dotted).
The pairing $\langle \eta ,\alpha \rangle $ equals $-1+2=+1$ on this example.}
\label{F:Pairing}
\end{figure}
A \term{cross-vector} on $\gamma$ is a vector tangent to $\Sigma$ that is located at a \emph{simple} point of $\gamma$, and is transverse to $\gamma$.
The set of such vectors, considered as a topological subspace of the tangent bundle of $\Sigma$, is denoted $C_\gamma$.
Note that this space has finitely many connected components.

\begin{defi}\label{D:Coor}
An integral \term{cross-functional} on $\gamma$ is a function $\eta: C_\gamma \to \ZZ$ that is locally constant and satisfies the equation $\eta(-v) = -\eta(v)$ for all $v \in C_\gamma$.
A \term{coorientation} of $\gamma$ is a cross-functional with values $\pm 1$.
Note that there are finitely many coorientations of $\gamma$.
\end{defi}

As mentioned in the introduction, each coorientation $\eta$ induces a cochain $c_\eta$ whose cohomology class $[c_\eta] \in \H^1(\Sigma;\ZZ)$ is in the dual unit ball $B^*_{\xx}$, as we will see below.
To reverse this process and show that each cohomology class $h \in B^*_{\xx}$ (equivalent to $\xx$ mod 2) can be represented by a coorientation, we must understand precisely which cochains are induced by coorientations or, more generally, by cross-functionals of $\gamma$.
These cochains are called \emph{cross-cochains}, and are characterized as follows.

Recall from Definition~\ref{D:Length} that $P_\gamma$ is the space of piecewise-smooth paths on $\Sigma$ that are generic with respect to $\gamma$.

\begin{defi}\label{D:cross-cochain}
An integral \term{cross-cochain} (with respect to the multicurve $\gamma$) is a function $c: P_\gamma \to \ZZ$ with the following properties:
\begin{itemize}
\item \emph{additive} with respect to concatenation of paths, that is, such that $c(\delta\eps) = c(\delta) + c(\eps)$ if~$\delta$,~$\eps \in P_\gamma$ are consecutive paths;
\item \emph{alternating} with respect to path reversion, that is, such that $c(\alpha^\dag) = -c(\alpha)$ for all paths $\alpha \in P_\gamma$;
\item \emph{supported on $\gamma$}, that is, such that $c(\alpha) = 0$ if $\alpha$ does not meet $\gamma$;
\item \emph{locally constant}, that is, constant on any continuous family $(\eps_t)_{t\in[0,1]}$ of smooth paths $\eps_t \in P_\gamma$.
(Such a family of paths is not called a homotopy of paths because the endpoints may move.
However, the endpoints never cross $\gamma$, since at the instant of crossing the path would not be in $P_\gamma$.)
\end{itemize}
\end{defi}

\begin{defi}\label{D:Pairing}
The \term{integral} of a cross-functional~$\eta$ along a path $\alpha \in P_\gamma$ is the number
\[ c_\eta(\alpha) := \sum_{t \in \alpha^{-1}(\gamma) } \eta(\alpha'(t)). \]
\end{defi}

\begin{lemma}\label{L:bijection_functional_cochain} The map $\eta \mapsto c_\eta$ is a bijection from the set of integral cross-functionals to the set of integral cross-cochains on $\gamma$.
\end{lemma}

The proof is straightforward.

\begin{proof} For a cross-functional $\eta$, it is clear that $c_\eta$ is a cross-cochain.
Let $F$ be the map from the set of integral cross-functionals to the set of integral cross-cochains given by $F(\eta) = c_\eta $.

To show that $F$ is bijective, we use the following notation.
For a cross-vector $v \in C_\gamma$, let $C_\gamma(v)$ be the connected component of $C_\gamma$ contaning $v$, and let $P_\gamma(v)$ be the set of smooth paths in $P_\gamma$ that cross $\gamma$ exactly once, and with velocity $v'$ in~$C_\gamma(v)$.
Note that any two arcs $\eps_0$, $\eps_1 \in P_\gamma(v)$ are connected by a continuous family of arcs $(\eps_t)_{t\in[0,1]}$ in $P_\gamma(v)$.
This implies that any cross-cochain is constant on $P_\gamma(v)$.

The map $F$ is injective since given two cross-functionals $\eta \neq \eta'$, we see that $c_\eta(v) \neq c_{\eta'}$ by evaluating these two cochains at a path $\gamma \in P_\gamma(v)$, where $v \in C_\gamma$ is a cross-vector such that $\eta(v) \neq \eta'(v)$.

Now let us show that $F$ is surjective.
Given a cross-cochain $c$, we shall produce a cross-functional $\eta$ such that $c_\eta=c$.
We define $\eta$ as follows: for each cross-vector $v \in C_\gamma$, we set $\eta(v) := c(\alpha)$, for any $\alpha \in P_\gamma(v)$.
This value is well defined since $c$ is constant on $P_\gamma(v)$, as noted above.
In addition, it is clear that $\eta(-v) = \eta(v)$.
This shows that $\eta$ is a cross-functional.

To finish, let us show that $c_\eta = c $.
Given a path $\alpha \in P_\gamma$, we decompose it as a concatenation of smooth paths $\alpha_i \in P_\gamma$, where each $\alpha_i$ meets $\gamma$ once with certain velocity $v_i$, or not at all, in which case we say that $i$ is trivial.
Then we have
\[ c(\alpha)
= \sum_i c( \alpha_i )
= \sum_{i \text{ nontrivial}} c( \alpha_i )
= \sum_{i \text{ nontrivial}} \eta( v_i )
= c_\eta(\alpha). \qedhere\]
\end{proof}

\begin{remark}\label{R:Unitary} A cross-functional $\eta$ is a coorientation if and only if the cross-cochain $c_\eta$ is \term{unitary}, that is, it satisfies for each path $\alpha \in P_\gamma$ the condition
\[ c_\eta(\alpha) = \pm 1
\quad\text{ if }\quad
\Len_\gamma(\alpha)=1,\]
or, equivalently, the conditions
\[ c_\eta(\alpha) \leq \Len_\gamma(\alpha)
\quad\text{ and }\quad
c_\eta(\alpha) \equiv \Len_\gamma(\alpha) \mod 2.\]
\end{remark}

\subsection{Eulerian coorientations}

\begin{defi}\label{D:Eulerian}
A coorientation $\eta$ of~$\gamma$ is \term{Eulerian} if the cochain $c_\eta$ is closed, i.e. if $c_\eta(\alpha) = 0$ whenever $\alpha$ is a contractible closed curve, or, equivalently, if $c_\eta(\alpha)$ depends only on the homotopy class $\{\alpha\}$.
(The homotopies we consider here are with fixed endpoints and disregarding $\gamma$, meaning that the intermediate paths may not be in $P_\gamma$.)
The set of all Eulerian coorientations of~$\gamma$ is denoted by~$\Eul(\gamma)$.
\end{defi}

\noindent\begin{minipage}[t]{0.65\textwidth}
\hspace{3mm}
Equivalently, $\eta$ is Eulerian if around each double point $p$ of $\gamma$, among the four pieces of $\eta$ that meet at $p$ there are exactly two with positive coorientation and two with negative coorientation.
Hence the local picture of $\eta$ at $p$ is one of the following two:
either when travelling straight along~$\gamma$ and encountering~$p$ the coorientation changes ---in this case the coorientation is said to be \term{alternating} at~$p$---, or the coorientation does not change when following~$\gamma$ ---in which case it is~\term{non-alternating} at~$p$.
\end{minipage}
\begin{minipage}[t]{0.35\textwidth}
\begin{picture}(0,0)
\put(3,-38){\includegraphics[width=14mm]{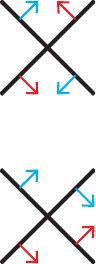}}
\put(22,-7){alternating}
\put(20,-32){non-alternating}
\end{picture}
\end{minipage}
\vspace{3mm}

\noindent
\begin{minipage}[t]{0.75\textwidth}
\begin{example}\label{Ex:Birkhoff}
If $[\gamma]_2\in\H^1(\Sigma;\ZZ/2\ZZ)$ is zero, then the regions of~$\Sigma\setminus\gamma$ can be colored in black and white in such a way that adjacent regions have different colors.
In this case we can coorient all edges toward the white regions.
The obtained coorientation is Eulerian, all double points being alternating.
%We call it a \term{Birkhoff coorientation}, since the surface we will construct from it is isotopic to the historical construction of~\cite{birkhoff1917dynamical}.
\end{example}
\end{minipage}
\begin{minipage}[t]{0.25\textwidth}
  \begin{picture}(0,0)(0,0)
  \put(5,-24){\includegraphics[width=25mm]{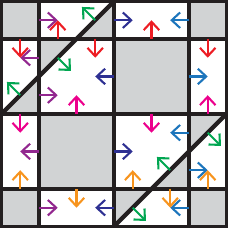}}
  \end{picture}
\end{minipage}

\vspace{6mm}
\noindent
\begin{minipage}[t]{0.25\textwidth}
  \begin{picture}(0,0)(0,0)
  \put(0,-24){\includegraphics[width=25mm]{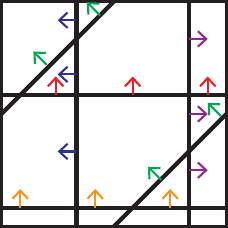}}
  \end{picture}
\end{minipage}
\begin{minipage}[t]{0.75\textwidth}
  \begin{example}\label{Ex:Brunella}
  There always exist global Eulerian coorientations, even when $[\gamma]_2\in\H_1(\Sigma; \ZZ/2\ZZ)$ is not zero.
  Indeed one can choose a coorientation for every component of~$\gamma$. 
  This yields an Eulerian coorientation having only non-alternating vertices. 
  If~$\gamma$ consists of~$c$ immersed curves, there are~$2^c$ such coorientations.
  \end{example}
\end{minipage}
\vspace{3mm}

\begin{remark}\label{L:EulerianHomology}
If $\eta$ is an Eulerian coorientation of~$\gamma$, then for every generic closed 1-chain~$\alpha$, the number~$c_\eta(\alpha)$ depends only of the homology class~$[\alpha]\in\H_1(\Sigma; \ZZ)$.
% \begin{proof}
% If~$\alpha, \alpha'$ are homologous, then their difference $\beta = \alpha - \alpha'$ bounds a singular 2-chain in~$\Sigma$, and is therefore a sum of contractible curves.
% Since $\eta$ is Eulerian, we have $c_\eta(\beta) = 0$, and so $c_\eta(\alpha) = c_\eta(\alpha')$.
% \end{proof}
In consequence, $\eta$ induces a cohomology class~$[\eta]:= [c_\eta]$ in~$\H^1(\Sigma; \ZZ)$.
\end{remark}

We denote~$[\Eul(\gamma)]$ the subset of~$\H^1(\Sigma; \ZZ)$ consisting of the cohomology classes of the Eulerian coorientations on~$\gamma$.
Theorem~\ref{T:Coor} states that
\[ [\Eul(\gamma)] = \left\{ h \in \overline{B^*_{\xx}} \mid h_{\mmod 2} = [\gamma]_2 \right\}. \]
Let us prove the easy inclusion $\subseteq$, that is, that the cohomology class induced by any Eulerian coorientation is in the dual unit ball $B^*_{\xx}$ (i.e. it is $\leq \xx$) and also coincides with $[\gamma]_2$ modulo 2.

\begin{lemma}\label{L:Inclusion}
For every Eulerian coorientation $\eta$ of~$\gamma$ and every homology class~$a$ in~$\H_1(\Sigma; \ZZ)$, we have~$[\eta](a)\le\xxx{a}$ and also $[\eta](a) \equiv [\gamma]_2(a) \mod 2$.
\end{lemma}

\begin{proof}
Let $\alpha$ be an $\xx$-realizing curve of class~$a$.
Then $\langle \eta,\alpha\rangle$ counts every intersection point of~$\alpha$ and~$\gamma$ with a coefficient~$\pm1$, while~$\xxx{a}$ counts these same intersection points with a coefficient~$+1$ each.
Hence we have
\[ c_\eta(\alpha) \leq \xx(\alpha)
\quad \text{ and also } \quad
  c_\eta(\alpha) \equiv \xx(\alpha) \mod 2. 
\qedhere\]
\end{proof}

To prove the reverse inclusion we will use {eikonal functions}.

\subsection{Eikonal functions on the universal cover}
\label{S:Eikonal}
As before, $\Sigma$ is a compact closed surface with a multicurve $\gamma$ on it.

Our task now is to define the eikonal functions on the universal cover $\left( \wt\Sigma,\pi \right)$.
The space $\wt\Sigma$ has a multicurve $\wt\gamma := \pi^*(\gamma)$ (that is, the pullback of $\gamma$ by the map $\pi$), which induces a length functional $\Len_{\wt\gamma}$ and therefore, a distance function $d_{\wt\gamma}$, which we need to define the notion of eikonal functions.
However, we will instead define the distance function directly in terms of the multicurve $\gamma$, by taking advantage of the standard explicit construction of the universal cover.

We construct the universal cover $\left( \wt\Sigma, \pi \right)$ of the surface $\Sigma$ as follows.
The space~$\wt\Sigma$ is the set of homotopy classes of paths in $\Sigma$ starting at $p_0$, where $p_0 \in \Sigma \setminus \gamma$ is a fixed, arbitrary point.
Thus each point $x \in \wt\Sigma$ is of the form~$x = \{\alpha\}$ where $\alpha$ is a path in $\Sigma$ starting at $p_0$, and $\{\alpha\}$ denotes its homotopy class (with fixed endpoints).
In particular, the space $\wt\Sigma$ has a natural base point $x_0 = \{1_{p_0}\}$, where $1_{p_0}$ is the trivial path at $p_0$.
The covering map $\pi:\wt\Sigma \to \Sigma$ is the function that sends each homotopy class $\{\alpha\}$ to the endpoint of the path $\alpha$.

The fundamental group $\Pi_1(\Sigma,p_0)$, hereafter denoted $\Pi_1$, acts (on the left) on~$\wt\Sigma$ as follows: each loop homotopy class $\{\beta\} \in \Pi_1$ induces on $\wt\Sigma$ a transformation $T_\beta: \{\alpha\} \mapsto \{\beta\alpha\}$, where $\beta\alpha$ is the concatenation of the path $\beta$ followed by the path~$\alpha$.
This action commutes with the covering map $\pi$ (that is, it satisfies $\pi \circ T_\beta = \pi$ for all $\{\beta\} \in \Pi_1$) and is transitive on each fiber of $\pi$.

The length with respect to $\gamma$ of a homotopy class $\{\alpha\}$ is defined as the minimum length of a generic path $\alpha'$ in the class,
\[ \Len_\gamma\{ \alpha \} =  \min_{\alpha'\in\{\alpha\}\cap P_\gamma} \Len_\gamma( \alpha' ). \]
The \term{distance} between two points $x=\{\alpha\}$,~$y =\{\beta\} \in \wt\Sigma$ not located on the multicurve $\wt\gamma := \pi^*(\gamma)$ is
\[ d_{\wt\gamma}( x, y ) = \Len_{\gamma}\{ \alpha^\dag \beta \} \]
where $\alpha^\dag$ is the reverse of the path $\alpha$.
Note that $d_{\wt\gamma}$ satisfies the triangle inequality, therefore it is an integer-valued (but non positive-definite) distance function on $\wt\Sigma \setminus \wt\gamma$.
Moreover, an easy computation shows that the transformations $T_\beta$ preserve this distance function.

\begin{defi}\label{def:eikonal}
An integer-valued function~$f$ defined on a subset $D$ of $\wt\Sigma \setminus \wt\gamma$ is said \term{pre-eikonal} if it satisfies
\begin{equation}\label{eq:pre-eikonal}
| f(y) - f(x) | \leq d_{\wt\gamma}( y, x ) \quad\text{ and }\quad
f(y) - f(x) \stackrel{\mmod 2}{\equiv} d_{\wt\gamma}( y, x )
\quad\text{ for all $x$,~$y \in D$},
\end{equation}
An \term{eikonal function} is a pre-eikonal function defined on the whole set $\wt\Sigma \setminus \wt\gamma$.
\end{defi}

\begin{remark}
A function $f:\wt\Sigma \setminus \wt\gamma \to \ZZ$ is eikonal if and only if it satisfies the local condition
\[ f(y) - f(x) = \begin{cases}
0     &\text{ when }d_{\wt\gamma}(x,y) = 0\\
\pm 1 &\text{ when } d_{\wt\gamma}(x, y) = 1.\end{cases} \]
\end{remark}

The term ``eikonal function'' comes from geometric optics, where it describes a (possible singular) real-valued function $f$ that solves the eikonal equation $\|\nabla f\| \equiv 1$.
The eikonal functions defined above are discrete analogues of these real-valued functions.

\begin{defi}
An integer-valued function~$f$ defined on a subset $D$ of $\wt\Sigma \setminus \wt\gamma$ is said \term{equivariant} with respect to a cohomology class $h \in \H^1(\Sigma; \ZZ)$, or $h$-equivariant, if it satisfies
\[ f(y) - f(x) = h[\beta] \]
for all pairs of points $x, y \in D$ and all loop homotopy classes $\{\beta\} \in \Pi_1$ such that $T_\beta(x) = y$.
\end{defi}

Every Eulerian coorientation $\eta$ of $\gamma$ determines a function $f_\eta: \wt\Sigma \setminus \wt\gamma \to \ZZ$, called the \term{primitive} of $\eta$, by the formula
\[ f_\eta: \{\alpha\} \mapsto c_\eta(\alpha). \]
The number $c_\eta(\alpha)$ does not depend on how the path $\alpha$ is chosen within its homotopy class since $\eta$ is Eulerian.

\begin{lemma}\label{L:bijection_eulerian_eikonal}
For each cohomology class $h \in \H^1(\Sigma;\ZZ)$, the map $\eta \mapsto f_\eta$ bijects the set of Eulerian coorientations of~$\gamma$ of cohomology class $h$ to the set of $h$-equivariant eikonal functions on $\wt\Sigma \setminus \wt\gamma$ that vanish at the base point $x_0 = \{1_{p_0}\}$.
\end{lemma}

\begin{proof}
Let us first see that for each Eulerian coorientation $\eta$, the function $f_\eta$ is eikonal, $[\eta]$-equivariant, and vanishes at the base point.
The last claim is clear: since the trivial path $1_{p_0}$ does not meet $\gamma$, we have
\[ f_\eta(x_0) = c_\eta(1_{p_0}) = 0. \]
To see that $f$ is eikonal, take two points $\{\alpha\}$,~$\{\beta\} \in \wt\Sigma\setminus \wt\gamma$ at distance $d_{\wt\gamma}\left( \{\alpha\},\{\beta\} \right) = 1$.
This means that there exists a path $\eps \in P_\gamma$ homotopic to $\alpha^\dag\beta$ with $\Len_\gamma(\eps) = 1$.
Hence we can verify that
\begin{align*}
f_\eta\{\beta\} - f_\eta\{\alpha\}
=  c_\eta(\beta) - c_\eta(\alpha)
&= c_\eta(\alpha^\dag \beta)
  &\\
&= c_\eta(\eps)
  &\text{since $\eta$ is Eulerian}\\
&= \pm 1.
  &
\end{align*}
Similarly, one can see that $f_\eta\{\beta\} = f_\eta\{\alpha\}$
if $d_{\wt\gamma}\big(\{\alpha\},\{\beta\} \big) = 0$.
Finally, to see that $f_\eta$ is $[\eta]$-equivariant, we take a loop homotopy class $\{\beta\} \in \Pi_1$ and a point $\{\alpha\} \in \wt\Sigma \setminus \wt\gamma$ and we verify that
\begin{align*}
f_\eta\{\beta \alpha\}
= c_\eta(\beta \alpha)
&= c_\eta(\beta) + c_\eta(\alpha) \\
&= [\eta] [\beta] + f_\eta\{\alpha\}.
\end{align*}

Now let us fix a cohomology class $h \in \H^1(\Sigma;\ZZ)$.
As we have just shown, the map $\eta \mapsto f_\eta$ restricts to a map $R_h$ from the set of Eulerian coorientations of class $h$ to the set of $h$-equivariant eikonal functions on $\wt\Sigma\setminus \wt\gamma$ that vanish at the base point $x_0=\{1_{p_0}\}$.
Let us show that $R_h$ is bijective.

To prove that $R_h$ is injective, fix an Eulerian coorientation $\eta$ and a cross-vector $v \in C_\gamma$.
We shall express $\eta(v)$ in terms of the function $f_\eta$.
Take a path $\eps \in P_\gamma$ which crosses $\gamma$ just once with velocity $v$, and let $\alpha \in P_\gamma$ be an auxiliary path from $p_0$ to the startpoint of $\eps$.
Then we have
\[ f_\eta\{\alpha \eps\} - f\{\alpha\}
= c_\eta(\alpha\eps) - c_\eta(\alpha)
= c_\eta(\eps) = \eta( v ), \]
which shows that $\eta$ can be recovered from the function $f_\eta$, and thus $R_h$ is injective.

Finally, let us show that $R_h$ is surjective.
Let $f:\wt\Sigma \setminus \wt\gamma \to \ZZ$ be an equivariant eikonal function that vanishes at the base point $x_0 = \{1_{p_0}\}$.
We have to construct an Eulerian coorientation $\eta$ such that $f_\eta = f$.
To do so, we define first a cross-cochain $c$ as follows.
For any generic smooth path $\eps \in P_\gamma$, we let
\[ c(\eps) := f\{\alpha \eps\} - f\{ \alpha \} \]
where $\alpha \in P_\gamma$ is an auxiliary path from $p_0$ to the startpoint of $\eps$.
Let us show first that the value $c(\eps)$ is well defined.
Let $\alpha'$ be any other path from $p_0$ to the startpoint of $\eps$.
Then we can write $\{ \alpha '\} = \{\beta \alpha\}$ where $\beta := \alpha^\dag \alpha'$, and thus from the fact that $f$ is $h$-equivariant for some $h:\Pi_1 \to \ZZ$ we get
\begin{align*}
f\{\alpha' \eps\} - f\{\alpha'\}
&= f\{\beta \alpha \eps\} - f\{\beta \alpha\}
  & \\
&= h[\beta] + f\{\alpha \eps\}
- h[\beta] - f\{ \alpha \}
  &\text{since $f$ is $h$-equivariant}\\
&= f\{\alpha \eps\} - f\{ \alpha \}.
  &
\end{align*}

We claim that $c$ is a cross-cochain according to Definition~\ref{D:cross-cochain}, and in fact, a unitary and closed cross-cochain.

We note first that $c(\eps)$ only depends on the homotopy class $\{\eps\}$.
(This will imply that $c$ is closed as a cross-cochain.)

Let us prove that $c$ is additive with respect to concatenation of paths.
Let $\delta, \eps \in P_\gamma$ be consecutive paths.
To show that $c(\delta\eps) = c(\delta) + c(\eps)$, we take an auxiliary path $\alpha \in P_\gamma$ from $p_0$ to the startpoint of $\delta$.
Then we have
\begin{align*}
c( \delta \eps )
&= f\{ \alpha \delta \eps \} - f\{ \alpha \} \\
&= f\{ \alpha \delta \eps \} - f\{ \alpha \delta \}
  + f\{ \alpha \delta \} - f\{ \alpha \} \\
&= c( \eps ) + c( \delta ).
\end{align*}
Similary, let us show that $c$ is alternating with respect to path reversion.
Consider a path $\eps \in P_\gamma$ and its reverse $\eps^\dag$, and let $\alpha \in P_\gamma$ be an auxiliary path from $p_0$ to the startpoint of $\eps$.
Note that the path $\alpha\eps$ goes from $p_0$ to the startpoint of $\eps^\dag$, therefore we have
\begin{align*}
c( \eps^\dag )
&= f\{ \alpha \eps \eps^\dag \} - f\{ \alpha \eps \} \\
&= f\{ \alpha \} - f\{ \alpha \eps \} \\
&= - c( \eps ).
\end{align*}

Next, let us show that $c$ is supported on $\gamma$, i.e. that $c(\eps) = 0$ for any path $\eps \in P_\gamma$ that avoids $\gamma$.
Let $\eps \in P_\gamma$ be such a path, and let $\alpha \in P_\gamma$ be an auxiliary path from $p_0$ to the startpoint of $\eps$.
Then we have
\begin{align*}
c( \eps )
&= f\{ \alpha \eps \} - f\{ \alpha \}
  &\\
&\leq d_{\wt\gamma}\big( \{ \alpha \}, \{ \alpha \eps \} \big)
  &\text{since $f$ is an eikonal function}\\
&= \Len_\gamma\{ \alpha^\dag \alpha \eps \}
= \Len_\gamma\{ \eps \}
= 0.
\end{align*}
Similarly, let us show that $c$ is unitary.
For a path $\eps \in P_\gamma$ of length $\Len_\gamma(\eps) = 1$, we have to show that $c(\eps) = \pm 1$.
The fact that $\Len_\gamma(\eps) = 1$ implies that $\Len_\gamma\{\eps\} = 1$,
since the possibility $\Len_\gamma\{\eps\} = 0$ is excluded because homotopic paths have the same length modulo 2.
To compute $c(\eps)$ we take an auxiliary path $\alpha \in P_\gamma$ from $p_0$ to the startpoint of $\eps$ and we note that
\[ c( \eps ) = f\{ \alpha \eps \} - f\{ \alpha \} = \pm 1 \]
since $f$ is an eikonal function and
\[ d_{\wt\gamma} \big( \{\alpha\}, \{\alpha \eps\} \big)
= \Len_\gamma\{ \alpha^\dag \alpha \eps \}
= \Len_\gamma\{ \eps \} = 1. \]

Finally, let us show that $c$ is constant on any continuous family $(\eps_t)_{t\in[0,1]}$ of smooth paths $\eps_t \in P_\gamma$.
It suffices to verify that $c(\eps_0) = c(\eps_1)$.
Denote $r_t$ and $s_t$ the startpoint and endpoint of $\eps_t$ for each $t\in[0,1]$.
Note that the curves $r:t \mapsto r_t$ and $s:t \mapsto s_t$ avoid the multicurve $\gamma$.
These curves $r,s$ may not be in $P_\gamma$, but they surely can be approximated by respective curves $\rho, \sigma \in P_\gamma$ that are homotopic to $r$ and $s$ respectively (with fixed endpoints) and also avoid $\gamma$.
Then we have $\{\eps_0\} = \{\rho \eps_1 \sigma^\dag \}$, which
 implies that
\[ c(\eps_0) = c(\rho) + c(\eps_1) - c(\sigma) = c(\eps_1) \]
since $c(\rho) = c(\sigma) = 0$ because $\rho$ and $\sigma$ avoid $\gamma$.

This finishes the proof that $c$ is a closed, unitary cross-cochain.
Therefore, by Proposition~\ref{L:bijection_functional_cochain} (together with Remark~\ref{R:Unitary}), there exists an Eulerian coorientation $\eta$ of $\gamma$ such that $c = c_\eta$.
We see that $f_\eta = f$ because for any homotopy class $\{\alpha\} \in \wt \Sigma \setminus \wt\gamma$ (represented by a generic smooth path $\alpha \in P_\gamma$ starting at $p_0$ ) we have
\[ f_\eta\{\alpha\}
= c_\eta(\alpha)
= c(\alpha )
= f\{1_{p_0} \alpha\} - f\{1_{p_0}\}
= f\{\alpha\} \]
since $f\{1_{p_0}\} = 0$.
This shows that $f_\eta = f$, concluding the proof that $R_h$ is surjective.
\end{proof}

The next result is the key to proving Theorem~\ref{T:Coor}.

\begin{lemma}[Extension]\label{L:extension}
Every pre-eikonal function $f$ defined on a subset $D$ of $\wt \Sigma \setminus \wt \gamma$ can be extended to an eikonal function $\ov f:\wt \Sigma \setminus \wt\gamma \to \ZZ $ given by
\[ \ov f(x) = \min_{y\in D} f(y) + d_{\wt \gamma}(x,y). \]
\end{lemma}

\begin{proof}
For a point $x \in \wt\Sigma \setminus \wt\gamma$, we want to define~$\ov f(x)$.
We first observe that, for every $y\in D$, the value~$\ov f(x)$ must lie in the interval $I_{x,y} :=
\left[ f(y) - d_{\wt \gamma}(x,y), f(y) + d_{\wt \gamma}(x,y) \right]$.
See Figure~\ref{F:Eikonal}.

\begin{figure}
\centering
\begin{picture}(70,50)
\put(0,0){\includegraphics[width=70mm]{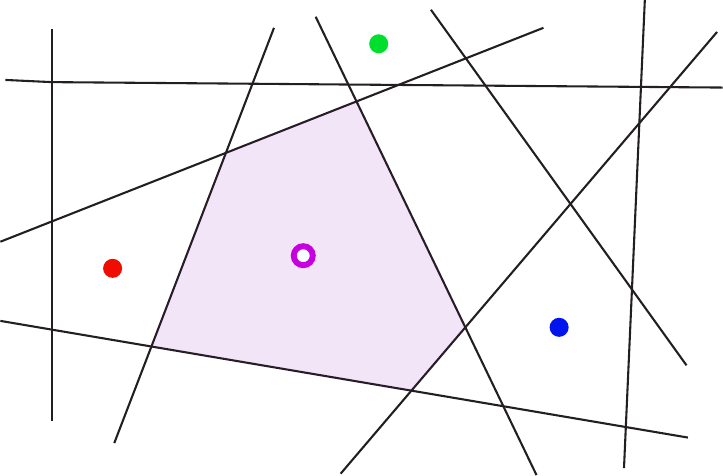}}
\put(27,24){$x$}
\put(7,17){$y_1$}
\put(-17,23){$f(y_1)=0$}
\put(-20,17){$I_{x,y_1}=[-1,1]$}
\put(38,41){$y_2$}
\put(16,46){$f(y_2)=2 \quad I_{x,y_2}=[-1,5]$}
\put(49,14){$y_3$}	
\put(64,17){$f(y_3)=-1$}
\put(62,8){$I_{x,y_3}=[-3,1]$}
\end{picture}
\caption{\small A part of the multicurve~$\wt\gamma$ (black and thin).
Assume that the set~$D$ consists of three points~$y_1, y_2, y_3$ (red, green and blue dots) with prescribed values $f(y_1)=0, f(y_2)=2$ and $f(y_3)=-1$.
Considering a fourth point~$x$ (purple), we see that we have
$I_{x,y_1}=[-1,5]$,
$I_{x,y_2}=[-1,1]$ and
$I_{x,y_3}=[-3,1]$.
In particular these three intervals intersect, and one can set~$\ov f(x) = 1$. }
\label{F:Eikonal}
\end{figure}
We claim that for every $y, y'$ in~$D$, the intervals $I_{x, y}, I_{x, y'}$ intersect.
Otherwise there would exist two points $y, y'$ such that $f(y)+d_{\wt \gamma}(x,y)<f(y')-d_{\wt \gamma}(x,y')$, which implies $f(y')-f(y)>d_{\wt \gamma}(x,y)+d_{\wt \gamma}(x,z)\ge d_{\wt \gamma}(y,y')$, contradicting pre-eikonality of $f$.
Now, any set of intervals in~$\RR$ that pairwise intersect has a global common point.
Therefore the intersection $\displaystyle\cap_{y\in D} I_{x,y}$ is non-empty.
So we define $\ov f(x)$ as the highest common point $\ov f(x):=\min_{y\in D} f(y)+d_{\wt \gamma}(x,y)$ of these intervals.

We claim that the extension~$\ov f$ is pre-eikonal (and therefore eikonal, since it is defined at all points of $\wt\Sigma \setminus \wt\gamma$).
Indeed, to prove that $|f(x')-f(x)|\leq d_{\wt \gamma}(x,x')$, it is enough to check that
\[ \left| \left( f(y) + d_{\wt \gamma}(x',y) \right)
         -\left( f(y) + d_{\wt \gamma}(x ,y) \right) \right|
\leq d_{\wt \gamma}(x,x')\]
for each $y$, which follows from the triangle inequality in the form
\[ \left| d_{\wt \gamma}(x',y)
        - d_{\wt \gamma}(x,y) \right| \leq d_{\wt \gamma}(x,x'). \]
To prove that $ f(x') - f(x) \equiv d_{\wt \gamma}(x,x') $ modulo 2, we write
\begin{align*}f(&x')-f(x)=\\
&= \left( f(y') + d_{\wt \gamma}(x',y') \right)
 - \left( f(y ) + d_{\wt \gamma}(x ,y ) \right)
\quad\text{ for certain $y,y'\in D$}\\
&\equiv d_{\wt \gamma}(y,y')+d_{\wt \gamma}(x',y')-d_{\wt \gamma}(x,y)
\quad\text{ modulo 2 since $f$ is pre-eikonal}\\
&\equiv d_{\wt \gamma}(y,y')+d_{\wt \gamma}(x',y')+d_{\wt \gamma}(x,y)
\quad\text{ since plus and minus coincide mod 2}\\
&\equiv d_{\wt \gamma}(x,x')
\quad\text{ since homotopic paths have equal length mod 2.\qedhere}
\end{align*}
\end{proof}

Note that a pre-eikonal function $f$ generally admits several eikonal extensions.
The one we denoted $\ov f$ is the \emph{highest} one.
It has the advantage of being determined by $f$ by an explicit formula.

\subsection{Proof of Theorem~\ref{T:Coor}}
As explained after Lemma~\ref{L:Inclusion}, it remains to be shown that every cohomology class $h \in \BBx$ that coincides modulo 2 with~$[\gamma]_2$ is the cohomology class of some Eulerian coorientation $\eta$.
We fix such a cohomology class $h$.

Recall that we have chosen a point $p_0$ in~$\Sigma \setminus \gamma$ to construct the universal cover~$\wt\Sigma$ and the covering map $\pi:\wt\Sigma \to \Sigma$.
Denote $D = \pi^{-1}(p_0)$.
We define a function $f:D \to \ZZ$ by the formula $f\{\alpha\} = h [\alpha] $.
This function is well defined because homotopic paths are homologous.

\begin{claim}
The function $f:D \to \ZZ$ is an $h$-equivariant pre-eikonal function.
\end{claim}

\begin{proof}
Let us show that $f$ is $h$-equivariant.
Take a loop $\beta$ in $\Sigma$ based at the point~$p_0$.
Then for points $y = \{\alpha\}$,~$y'=T_\beta(y) = \{\beta \alpha\} \in D$ we have
\[ f(y') - f(y)
= h[ \beta \alpha ] - h[ \alpha ]
= h[ \beta ], \]
as claimed.
To show that $f$ is pre-eikonal we continue as follows.
Any two points $y$, $y'\in D$ can be written as $y = \{\alpha\}$, $y' = T_\beta(y) = \{\beta\cdot\alpha\}$.
Therefore we have
\[ f(y') - f(y) = h[\beta]
\leq \xx[\beta] \]
since $h \in \BBx$.
On the other hand, the distance between $y$ and $y'$ is
\begin{align*}
d_{\wt\gamma}(y, y')
&= \Len_\gamma\{\alpha^\dag \beta\alpha\}
  &\\
&= \Len_\gamma(\beta')
  &\text{for some path $\beta' \in \{\alpha^\dag \beta\alpha\}$,}\\
&\geq \xx[\beta']
  &\text{by definition of $\xx$,}\\
&= \xx[\beta]
  &\text{since $[\beta']=[\alpha^\dag \beta\alpha]=[\beta]$,}
\end{align*}
which shows that $f(y') - f(y) \leq d_{\wt\gamma}(y,y')$.
To see that $f(y') - f(y) \equiv d_{\wt\gamma}(y,y')$ modulo 2 we note that
\begin{align*}
f(y') - f(y)
&= h[\beta] \\
& \equiv [\gamma]_2[\beta]
  &\text{since $h \equiv [\gamma]_2$ modulo 2 } \\
& = [\gamma]_2[\beta']
  &\text{ since $[\beta'] = [\beta]$, with $\beta'$ as above}\\
& \equiv \Len_\gamma( \beta ' )
  &\text{modulo 2 by definition of $[\gamma]_2$}\\
& = d_{\wt\gamma}( y, y' ).
\end{align*}
This finishes the proof that $f$ is a pre-eikonal function.
\end{proof}

By the Extension Lemma~\ref{L:extension}, we can extend $f$ to an eikonal function $\ov{f}: \wt\Sigma \setminus \wt\gamma \to \ZZ$ defined by the formula
$\ov{f}(x) = \min_{y\in D} f(y) + d_{\wt \gamma}( y , x) $.

\begin{claim}
The function $\ov{f}$ is $h$-equivariant.
\end{claim}
\begin{proof} This follows from the fact that $f$ is $h$-equivariant.
Indeed, take a loop homotopy class $\{\beta\}$ in $\Pi_1$ and a point $x \in \wt\Sigma \setminus \wt\gamma$.
Then the value of $f$ at the translate point~$x' = T_\beta(x)$ is
\begin{align*}
\ov{f}(x')
&= \min_{y' \in D} f( y' ) + d_{\wt\gamma}( y', x' )
  \\
&= \min_{y \in D} f( T_\beta(y) ) + d_{\wt\gamma}( T_\beta(y), T_\beta(x) )
  \qquad\text{since $D = T_\beta(D)$}\\
&= \min_{y \in D} f( y ) + h[\beta] + d_{\wt\gamma}( y, x )
  \quad\text{since $f$ is $h$-equivariant and $T_\beta$ preserves $d_{\wt\gamma}$}\\
&= \ov f(x) + h[\beta].
  \qedhere
\end{align*}
\end{proof}

Since $f$ is an $h$-equivariant eikonal function, by Lemma~\ref{L:bijection_eulerian_eikonal} there exists a unique Eulerian coorientation $\eta$ with cohomology class $[\eta] = h$ such that $f_\eta = \ov f$.

Let us put everything together.
We have shown in Theorem~\ref{T:NORM} that $\xx$ is an integral seminorm on $\H_1(\Sigma; \ZZ)$, and this seminorm coincides modulo 2 with the cohomology class $[\gamma]_2$.
Therefore we can apply Theorem~\ref{T:Thurston} (the extension of Thurston's theorem).
We conclude that for each homology class $a \in \H_1(\Sigma; \ZZ)$, we have
\[ \xx(a)
= \max_{\substack{\varphi \in B^*_{\xx}\\h_{\mmod 2} = [\gamma]_2}} h(a)
= \max_{\eta \in \Eul(\gamma)} [\eta](a). \]
This concludes the proof of Theorem~\ref{T:Coor}.

\section{Birkhoff sections with symmetric boundary for the geodesic flow}\label{S:Birkhoff}

We now turn to geodesic flows on unit tangent bundles to hyperbolic surfaces and their Birkhoff sections. 
Unlike the two previous sections, the surfaces we consider are now equipped with a hyperbolic metric, and all considered multicurves are geodesic. 
We first recall in~\ref{S:Geodesic} what are the geodesic flow and the symmetric lift of a geodesic.
Then in~\ref{S:Construction} we associate to every Eulerian coorientation a surface in the unit tangent bundle needed for proving the first part of Proposition~\ref{T:Brunella}.
We recall in~\ref{S:SSF} the basic definitions on Birkhoff sections and the elements of Schwartzman--Fried--Sullivan Theory we need for our classification.
Then in~\ref{S:Anosov} we recall basic notions on pseudo-Anosov flows and Fried's result on their homology directions.
In~\ref{S:Affine} we make a bit of elementary algebraic topology for describing homology classes of surfaces with boundary.
This allows to prove in~\ref{S:Proof} the second part of Proposition~\ref{T:Brunella}, as well as Theorem~\ref{T:Classification}.

\subsection{Geodesic flow and symmetric collections of orbits}
\label{S:Geodesic}

Given a hyperbolic surface~$\Sigma$, its \term{unit tangent bundle} is the circle bundle~$\U\Sigma$ made of length 1 tangent vectors, that is $\U\Sigma=\{(p, v)\in \mathrm{T}\Sigma\,\big|\, \| v\|=1\}$.
The \term{geodesic flow}~$\fgeodt$ on~$\U\Sigma$ is the flow whose orbits are lifts of geodesics.
Namely for~$\alpha$ a geodesic on~$\Sigma$ parametrized with speed one, the orbit of~$\fgeodt$ going through the point~$(\alpha(0), \dot \alpha(0))\in\U\Sigma$ is described by $\fgeod^t((\alpha(0), \dot \alpha(0))=(\alpha(t), \dot \alpha(t))$.
For every oriented periodic geodesic~$\abas$ on~$\Sigma$, there is one periodic orbit of~$\fgeodt$ corresponding to the oriented lift of~$\abas$ and denoted by~$\vec \alpha$.
If $\alpha$ now denotes an unoriented geodesic on~$\Sigma$, there are two associated periodic orbits of~$\fgeodt$, one for each orientation.
We denote by~$\syma$ the union of these two periodic orbits, it is an oriented link in~$\U\Sigma$ that is invariant under the involution~$(p,v)\mapsto(p, -v)$.
A link of the form~$\syma_1\cup\dots\cup\syma_k$ is called a \term{symmetric link}.\footnote{The term ``antithetic link'' was suggested by Bruce Bartlett, but we remarked that symmetric is already used in the literature.}.

\subsection{Birkhoff--Brunella surfaces and the first part of Proposition~\ref{T:Brunella}}
\label{S:Construction}

Starting from a hyperbolic surface~$\Sigma$ and a finite collection~$\gamma$ of periodic geodesics\footnote{
In the sequel we always assume~$\gamma$ to be in general position, meaning in particular that no point belong to three different arcs. 
This is a restriction as there exists collection of geodesics on surfaces that exhibit triple points for all constantly curved metrics. 
One way to deal with this situation is to perturb the metric, allowing the curvature to slighty change so that the position of the collection becomes general. 
Indeed the arguments we use do not require constant curvature, only negative.}
on~$\Sigma$, we now explain how to associate to every Eulerian coorientation of~$\gamma$ a surface in~$\U\Sigma$ bounded by~$-\vecgamma$ and transverse to~$\fgeodt$, thus proving the first part of Proposition~\ref{T:Brunella}.

Fix a coorientation~$\eta$ (not yet Eulerian) of~$\gamma$.
For every edge $e$ of~$\gamma$ (\emph{i.e.} segment between two double points), we consider the set~$R^{e, \eta}$ of those tangent vectors based on~$e$ and pairing positively with~$\eta$.
It is a subset of in~$\U\Sigma$ of the form~$e\times[0, \pi]$ (see Figure~\ref{F:Segment}), hence we call it an \term{elementary rectangle}. 
With the notation of Section~\ref{S:Coor}, it is the closure of a connected component of~$C_\gamma$. 
Is is bounded by the two lifts of~$e$ in~$\U\Sigma$ (called the \term{horizontal part} of~$\partial R^{e, \eta}$) and two halves of the fibers of the extremities of~$e$ (called the \term{vertical part} of $\partial R^{e, \eta}$).
Note that the interior of~$R^{e, \eta}$ is transverse to the geodesic flow~$\fgeodt$ while the horizontal part of~$\partial  R^{e, \eta}$ is tangent to it.
We then orient~$R^{e, \eta}$ so that orbits of~$\fgeodt$ intersect it positively.
One checks that the induced orientation on~$\partial  R^{e, \eta}$ is opposite to the one given by~$\fgeodt$, as explained in Figure~\ref{F:Segment}. 
This is the reason why we want to consider negative orientations in Theorem~\ref{T:Classification} and Proposition~\ref{T:Brunella}.

\begin{figure}
  \centering
  \includegraphics[width=.5\textwidth]{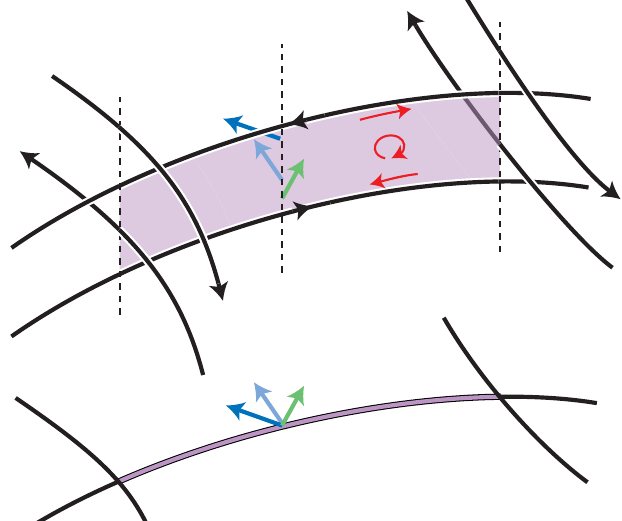}
  \caption{\small Bottom: an edge~$e$ of~$\gamma$ and a coorientation~$\eta$ on it. Top: the corresponding rectangle~$R^{e, \eta}$ in~$\U\Sigma$.
  The dotted lines represent the fibers of some points of~$\Sigma$, that is, each point on these lines represent a unit tangent vector to~$\Sigma$.
  Since the fibers are actually circles, the top and bottom extremities of the dotted lines should be glued.
  The rectangle~$R^{e, \eta}$ is transverse to the orbits of~$\fgeodt$ and the induced orientation is shown in red.
  The induced orientation of the horizontal boundary of~$R^{e, \eta}$ (red) is opposed to the orientation of the flow (black).
  Thus the surfaces we construct are transverse surfaces whose boundary components have multiplicity~$-1$. }
  \label{F:Segment}
\end{figure}

Consider now the 2-dimensional CW-complex~$\Cnu$ that is the union of the rectangles~$R^{e, \eta}$ over all edges~$e$ of~$\gamma$, see the left parts of Figures~\ref{F:BirSurface} and~\ref{F:BruSurface}.

\begin{lemma}{\label{L:Snu}}
The 2-complex~$\Cnu$ described above has boundary~$-\vecgamma$ if and only if the coorientation~$\eta$ is Eulerian.
\end{lemma}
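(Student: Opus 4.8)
The plan is to compute the boundary of the 2-complex $\Cnu = \bigcup_e R^{e,\nn}$ directly by examining how the pieces $R^{e,\nn}$ fit together, distinguishing the horizontal and vertical parts of $\partial R^{e,\nn}$. First I would observe that the horizontal part of $\partial R^{e,\nn}$ consists, for each edge $e$, of the two lifts of $e$ into $\US$ corresponding to the two unit tangent directions along $e$; as $e$ ranges over all edges of $\gamma$ these assemble into $\symgamma$ (with the orientation induced by $\fgeod$ on the horizontal boundary being $-\symgamma$, as was noted in the text just before the statement). Crucially this horizontal part is present in $\partial\Cnu$ no matter what $\nn$ is, because along an edge the rectangle $R^{e,\nn}$ is only glued to other rectangles along its vertical sides, never along the horizontal ones. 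So the content of the lemma is entirely about the vertical part: $\partial\Cnu = -\symgamma$ if and only if the vertical contributions cancel.

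Next I would analyze the vertical boundary near a fixed vertex $v \in V(\gamma)$. The vertex $v$ has four incident edge-ends $e_1, e_2, e_3, e_4$ (in cyclic order around $v$), and for each of them $R^{e_i,\nn}$ contributes a half-fiber over $v$: the arc of the circle fiber $\mathrm{T}^1_v\Sigma$ consisting of the unit vectors at $v$ that pair positively with $\nn$ along $e_i$, i.e. the half-circle of directions lying on the $\nn$-positive side of $e_i$. Each such half-fiber is a closed arc of length $\pi$ in the fiber circle, bounded by the two tangent directions of the line $e_i$. Its endpoints are exactly the points of $\symgamma$ over $v$. The key computation is then to add up these four oriented half-circles (with the orientations they inherit as part of $\partial R^{e_i,\nn}$, which are the boundary orientations coming from the chosen orientation of $R^{e_i,\nn}$ — opposite to $\fgeod$). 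I expect that this sum is the zero 1-chain in the fiber circle precisely when, among the four edge-ends at $v$, two are cooriented "into" and two "out of" the positive side in the alternating pattern required by the Eulerian condition; a transparent or alternating configuration makes the four half-arcs cover the circle with total multiplicity zero, whereas an unbalanced configuration (3 vs 1, or 4 vs 0) leaves a net nonzero arc.

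The cleanest way to organize the "if" direction is: assume $\nn$ is Eulerian. Locally at each vertex $v$, check in the two model cases (alternating vertex, transparent vertex — these are the only two up to rotation, per the text following Definition \ref{D:Eulerian}) that the four half-fibers, taken with boundary orientation, sum to zero as a $1$-chain in $\mathrm{T}^1_v\Sigma$; a small picture in the fiber circle makes this transparent. Since this holds at every vertex, the vertical part of $\partial\Cnu$ vanishes, leaving only the horizontal part, which is $-\symgamma$. For the "only if" direction: suppose $\nn$ is not Eulerian, so at some vertex $v$ the number of $\nn$-positive and $\nn$-negative edge-ends differ (3–1 or 4–0). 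Then the same local computation in the fiber circle shows the four half-arcs leave an uncancelled sub-arc of the fiber over $v$, so $\partial\Cnu$ contains a piece of the fiber of $v$ and in particular $\partial\Cnu \neq -\symgamma$. The main obstacle — and the only point requiring genuine care rather than bookkeeping — is getting the orientation conventions consistent: one must pin down the orientation of $R^{e,\nn}$ (determined by requiring $\fgeod$ to cross it positively, as in the text and Figure \ref{F:Segment}), deduce the induced boundary orientation on each vertical half-fiber, and verify that in the Eulerian model cases the oriented arcs genuinely cancel rather than merely matching up as unoriented sets. Once the orientations are fixed by Figure \ref{F:Segment}, this is a finite check at a single vertex in each of the two local models.
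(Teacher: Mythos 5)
Your proposal follows the paper's own argument almost exactly: observe that the horizontal boundary is always $-\symgamma$, reduce to showing the vertical boundary over each double point vanishes iff $\nn$ is Eulerian there, and then verify the cancellation of the four oriented half-fibers by a finite local check at each vertex (in the paper this is done via the orientation rule ``upwards at the right extremity of~$e$, downwards at the left'' and the figures for the alternating and transparent models). You correctly flag the orientation bookkeeping as the only point requiring care, and your treatment of both directions matches the paper's.
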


\begin{proof}
Since~$\Cnu$ is the union of one rectangle per edge of~$\gamma$, the horizontal boundary of~$\Cnu$ is always in~$\vecgamma$.
Since the orientation is opposite to the geodesic flow (see Figure~\ref{F:Segment}), it is actually~$-\vecgamma$.

What we have to check is that the vertical boundary is empty if and only if~$\eta$ is Eulerian.
At every double point~$v$ of~$\gamma$ there are four incident rectangles, corresponding to the four adjacent edges.
Now the vertical boundary of a rectangle~$R^{e, \eta}$ is oriented upwards at the right extremity of~$e$ (when cooriented by~$\eta$) and downwards at the left extremity.
Then the vertical boundary in a vertex of~$\gamma$ is empty if only if all vertical contributions cancel.
This is the case exactly when two edges are cooriented in a direction, and two others in the opposite direction: this means that~$\eta$ is Eulerian around~$v$.
Conversely, if~$\eta$ is Eulerian, then up to rotation there are two local configurations around~$v$ (that we called alternating and non-alternating), and one checks that in both cases, the vertical boundary is empty (see the left parts of Figures~\ref{F:BirSurface} and~\ref{F:BruSurface}).
\end{proof}

\begin{figure}[htb]
  \centering
  \includegraphics[width=.6\textwidth]{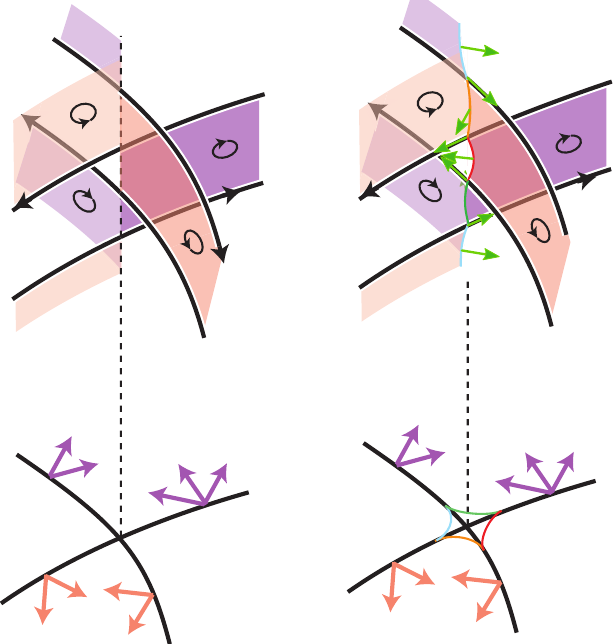}
  \caption{\small On the left, the complex~$\Cnu$ around the fiber of an alternating double point of~$\gamma$.
  Every point of the fiber of~$v$ is adjacent to exactly two rectangles.
  On the right the surface~$\Snu$ is obtained by smoothing~$\Cnu$ in a neighborhood of the fiber of the double point.
  Its interior is transverse to the vector field generating the geodesic flow~(green).}
  \label{F:BirSurface}
\end{figure}

\begin{figure}[htb]
  \centering
  \begin{picture}(90,70)
  \put(0,0){\includegraphics[width=87mm]{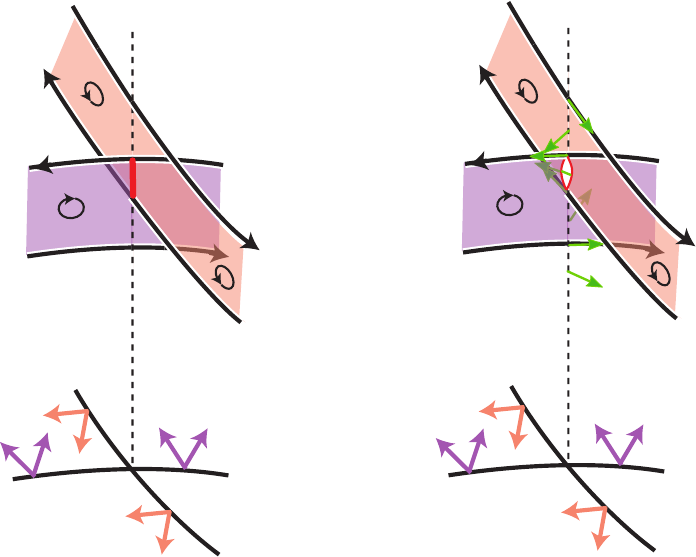}}
  \put(57,18){$1$}
  \put(78,19){$2$}
  \put(80,6){$3$}
  \put(64,6){$4$}
  \end{picture}
  \caption{\small On the left, the complex~$\Cnu$ around the fiber of a non-alternating double point of~$\gamma$.
  Every point of the fiber of~$v$ is adjacent to an even number of rectangles.
  On the right the surface~$\Snu$ is obtained by desingularizing~$\Cnu$ on the portion of the fiber where four rectangles meet.
  Note that the topology of the complex changes in this process.
  However its interior is still transverse to the vector field generating the geodesic flow (green).}
  \label{F:BruSurface}
\end{figure}

When~$\eta$ is Eulerian, the complex~$\Cnu$ is not a topological surface if~$\eta$ has some non-alternating points: as depicted on Figure~\ref{F:BruSurface}, there are edges in the vertical boundary of four adjacent rectangles, instead of two for obtaining a topological surface.
But it is the only obstruction and we can desingularize such segments as shown on the right of Figure~\ref{F:BruSurface}. 
More precisely, label by~$1, 2, 3, 4$ the quadrants around the considered non-alternating point so that two edges point toward~$1$ under the coorientation~$\eta$. 
Then the set~$s$ of those tangent vectors based on the double point and pointing toward quadrant number~$1$ is the singular segment to which four rectangles are adjacent. 
We thus split~$s$ into two segments~$s_{1}$ and~$s_3$, so that the extremities of both segments (in~$\U\Sigma$) coincide with the extremities of~$s$, but $s_1$ is pushed a bit into quadrant number~$1$, and $s_3$ is pushed a bit into quadrant number~$3$. 
Then we distort a bit the two rectangles adjacent to quadrant~$1$ so that their vertical boundary is~$s_1$, and we distort a bit the two rectangles adjacent to quadrant~$3$ so that their vertical boundary is~$s_3$. 
These gluings are made in a smooth way.
%Also if we want a smooth surface, we have to smooth~$\Cnu$ in a neighborhood of the fibers of the double points.
% In this way, we obtain a smooth surface, transverse to~$\fgeod$.

The main tool connecting Eulerian coorientations to Birkhoff sections is the following.

\begin{defi}\label{D:Snu}
For $\eta$ an Eulerian coorientation, the associated \term{BB-surface} is the surface~$\Snu$ obtained from~$\Cnu$ by desingularizing and smoothing the fibers of the double points of~$\gamma$, as on the right parts of Figures~\ref{F:BirSurface} and~\ref{F:BruSurface}.
\end{defi}

The term {BB} stands for Birkhoff--Brunella, as this construction generalises previous constructions by these two authors.
Indeed, the BB-surface associated to a Birkhoff coorientation (Example~\ref{Ex:Birkhoff}) is isotopic to the construction suggested by Birkhoff and popularized by Fried~\cite{birkhoff1917dynamical, fried1983transitive}.
Also the BB-surface associated to a Brunella coorientation (Example~\ref{Ex:Brunella}) was introduced by Brunella~\cite[Description~2]{brunella1994discrete}.
This construction already yields the first part of Proposition~\ref{T:Brunella}:

\begin{prop}\label{P:Transverse}
For $\Sigma$ a hyperbolic surface, $\gamma$ a geodesic multicurve, and $\eta$ an Eulerian coorientation of~$\gamma$, the associated surface~$\Snu$ is embedded in~$\U\Sigma$, it is bounded by~$-\symgamma$, and its interior is transverse to the orbits of the geodesic flow~$\fgeodt$.
\end{prop}

\begin{proof}
The surface $\Snu$ is obtained by desingularizing~$\Cnu$, so it is embedded.
Its boundary coincide with the boundary of~$\Cnu$, so it is (with orientation) $-\symgamma$.
Finally, the desingularization preserves the transversality to~$\fgeodt$.
Since~$\Cnu$ is positively transverse to~$\fgeodt$ away from its boundary, so is~$\Snu$.
\end{proof}

\subsection{Birkhoff sections and Schwartzman--Fried--Sullivan Theory}
\label{S:SSF}

Our goal here is to present a criterion for the existence of a Birkhoff section in a given homology class. 
Such a criterion exists when the Birkhoff section has no boundary (in this case we call it a global cross section), and it goes back to Schwartzman. 
It can be adapted to Birkhoff sections using a blow-up construction. 
%We now present global cross sections to flows which are Birkhoff sections with no boundary, an existence criterion due to Schwartzman, and how to adapt it to embedded Birkhoff sections following Fried.

\begin{defi}

Let $M$ be a compact 3-manifold and let~$(\varphi_X^t)_{t\in\RR}$ be a flow on $M$ generated by a smooth non-vanishing vector field $X$.
(Note that $X$ must be tangent to the boundary $\partial M$, which must therefore be toric.)
A \term{global cross section} for~$(M, (\varphi_X^t)_{t\in\RR})$ is a compact orientable surface with boundary~$S$ such that
\begin{itemize}
\item $S$ is embedded in~$M$ with $S\cap \partial M=\partial S$,
\item $S$ is positively transverse to~$X$,
%\item the boundary $\partial S$ is transverse\footnote{Here no positivity is required, the sign of the transversality actually depends whether the orientation of~$\partial S$ induced by the orientation of~$S$ agrees or not with the global direction of the flow on~$\partial M$.} to~$X$ in $\partial M$,
\item every orbit of~$X$ intersects~$S$.
Note that the time to reach~$S$ is a continuous (and hence, bounded) function on~$M$.
\end{itemize}
\end{defi}

When such a global cross section exists, there is a well defined, bijective first-return map~$f$ on~$S$ and the first-return time~$\tau$ is bounded from above and below by compactness.
In this case~$M$ fibers over the circle with fiber~$S$, so that~$M$ equipped with the vector field $X$ is homeomorphic to~$S\times[0,1]/_{(p,1)\sim(f(p),0)}$ equipped with $\tau(p)\frac{\partial}{\partial z}$, where~$\frac \partial{\partial z}$ denotes the vector field tangent to the last coordinate and $f(p) = \varphi^{\tau(p)}(p)$ is the first-return map.
The dynamics of the flow~$(\varphi_X^t)_{t\in\RR}$ is then, up to the time-reparametrisation function~$\tau$, the dynamics of the map~$f$.
  
The following remark is folklore, see for example the discussion at the beginning of~\cite[Section 3]{thurston1986norm}.
It suggests that questions of existence of global cross sections are of algebraic nature.
  
\begin{prop}\label{P:isotopy}
For $(M, (\varphi^t)_{t\in\RR})$~a flow and~$S_1, S_2$ two global cross sections, there is an isotopy along orbits of~$(\varphi^t)_{t\in\RR}$ that sends $S_1$ on~$S_2$ if and only if $S_1$ and~$S_2$ represent the same class in~$\H_2(M, \partial M; \ZZ)$.
\end{prop}

\begin{proof}
The direct implication $\implies$ is obvious.
For the converse, let $\widehat M$ be the infinite cyclic cover of~$M$ associated to the class $[S_1] = [S_2] \in \H_2(M, \partial M; \ZZ)$ ($= H^1(M;\ZZ)$ by Lefshetz duality).
By construction, the surface~$S_1$ lifts to $\ZZ$ distinct parallel copies~$(S^{(n)}_1)_{n\in\ZZ}$.
The flow~$(\varphi^t)_{t\in\RR}$ lifts to a flow~$(\hat\varphi^t)_{t\in\RR}$ in~$\widehat M$.
Since~$S_1$ intersects all orbits of~$(\varphi^t)_{t\in\RR}$, every orbit of~$(\hat\varphi^t)_{t\in\RR}$ intersects each of the surfaces $(S^{(n)}_1)_{n\in\ZZ}$ one after the other.

Now, $S_2$ also lifts to $\ZZ$ parallel copies in~$\widehat M$ with the same property.
In particular every orbit of~$(\hat\varphi^t)_{t\in\RR}$ intersects exactly once each of the surfaces~$S^{(0)}_1$ and~$S^{(0)}_2$.
Hence for $p\in S^{(0)}_1$, we can define~$t_p$ to be the unique time so that~$\hat\varphi^{t_p}(p)\in S^{(0)}_2$.
The isotopy~$(f_s:p\mapsto \hat\varphi^{st_p}(p))_{s\in[0,1]}$ hence connects~$S^{(0)}_1$ to~~$S^{(0)}_2$ along orbits of~$(\hat\varphi^t)_{t\in\RR}$.
Projecting back to~$M$ yields the result.
\end{proof}

Note that if we are given a global cross section~$S$, it intersects all orbits positively.
So, taking homology classes, we see that the class~$[S] \in \H_2(M, \partial M; \ZZ)$ intersects positively all homology classes of periodic orbits of the flow.
One may wonder whether the above remark can be turned into a sufficient condition: when does a given homology class~$\sigma$ in~$\H_2(M, \partial M; \ZZ)$ contain a global section?

The answer has been given by Sol Schwartzman~\cite{schwartzman1957asymptotic} and Francis Fuller \cite{fuller1965surface}, and rephrased by Dennis Sullivan~\cite{sullivan1976cycles}.
The quicker way to express it requires to consider invariant measures as currents and to consider their homology classes:
given an $X$-invariant probability measure~$\mu$, the associated 1-current~$c_\mu$ is the linear functional on the space~$\Omega^1(M)$ of 1-forms defined by $c_\mu(\lambda) = \int_M \lambda(X(p)) d\mu(p)$.
Since~$\mu$ is invariant, $c_\mu$ is closed as a current, hence it induces a homology class $[c_\mu]$ in $\H_1(M; \RR)$.
The latter is called the \term{Schwartzman asymptotic cycle} associated to~$\mu$.
The set of all asymptotic cycles is denoted by~$\Schw{X}$.
It is a convex subset of~$\H_1(M; \RR)$ which contains the classes of the periodic orbits (consider the Dirac linear invariant measures carried by periodic orbits).
The following criterion is due to Schwartzman in the case $M$ has no boundary, and to Fried when~$\partial M$ is non-empty~\cite{schwartzman1957asymptotic, fried1982geometry}.
Here $\langle\cdot,\cdot\rangle_{(M, \partial M)}$ denotes the intersection pairing~$\H_2(M, \partial M; \RR)\times\H_1(M; \RR)\to\RR$.
Note that $H_2(M,\partial M;\ZZ) \subseteq H_2(M,\partial M;\ZZ) \otimes \RR = H_2(M,\partial M;\RR)$ by the universal coefficient theorem for homology.

\begin{theo}[Schwartzman--Fuller--Fried]
Let $M$ be a 3-manifold with toric boundary equipped with a non-vanishing vector field $X$ tangent to~$\partial M$.
A class~$\sigma$ in~$\H_2(M, \partial M; \ZZ)$ contains a global section for~$(M, X)$ if and only if for every asymptotic cycle~$c_\mu\in\Schw{X}$ one has $\langle \sigma, c_\mu\rangle_{(M, \partial M)} >0$.
\end{theo}

Now we turn to Birkhoff sections.
Recall from the introduction:

\begin{defi}
For $M$ a compact, orientable $3$-manifold with no boundary, $X$ a
non-vanishing vector field on~$M$ whose flow is denoted by $(\varphi_X^t)_{t\in\RR}$, an \term{embedded Birkhoff section} for $(M, (\varphi_X^t)_{t\in\RR})$ is a compact orientable surface~$S$\\\noindent
\begin{minipage}[t]{0.65\textwidth}
embedded in~$M$ such that
\begin{itemize}
  \item the interior of $S$ is positively transverse to~$X$,
  \item its boundary $\partial S$ is tangent to~$X$,
  \item we have $\varphi_X^{[0,T]}(S)=M$ for some~$T>0$.
\end{itemize}
\end{minipage}
\begin{minipage}[t]{0.35\textwidth}
  \begin{picture}(0,0)
  \put(2,-17)  {\includegraphics[width=40mm]{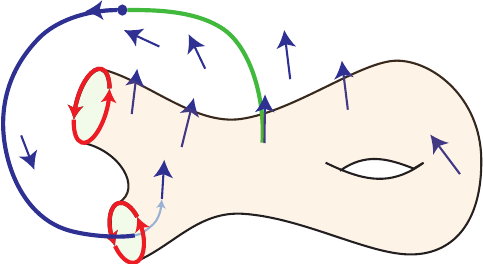}}
  \label{F:BirkhoffSection}
  \end{picture}
\end{minipage}
\end{defi}

\medskip

The second condition implies that the boundary of~$S$ is the union of finitely many periodic orbits of~$X$.
Note that one sometimes allows the boundary of~$S$ to be immersed instead of embedded, as in~\cite{colin2022generic}.
In such case we say that $S$ is an {\it immersed Birkhoff section}.

The first and second conditions in the definition of a Birkhoff section may look hard to realize at the same time, but actually it is not the case: in a flow box oriented so that the vector field is vertical, the general picture of an embedded Birkhoff section near its boundary is that of one helicoidal staircase. Since the interior of a Birkhoff section~$S$ is transverse to~$X$, it is cooriented by~$X$.\\\noindent
\begin{minipage}[t]{0.60\textwidth}
Since $M$ is oriented, this induces an orientation on~$S$, and in turn an orientation of~$\partial S$.
On the other hand, $\partial S$ is a collection of periodic orbits of~$X$, so it is oriented by~$X$.
For every component~$\beta$ of~$\partial S$, we can then define the multiplicity of $\beta$ as the algebraic number of times one sees $\beta_i$ in~$\partial S$.
Since we restrict our attention to embedded Birkhoff sections, this multiplicity is always $\pm1$.
We call a Birkhoff section~\term{positive} ({\it resp.} \term{negative}) if every boundary component has multiplicity $+1$ ({\it resp.} $-1$).
\end{minipage}
\begin{minipage}[t]{0.40\textwidth}
  \begin{picture}(0,0)(0,0)
  \put(5,-34){\includegraphics[width=45mm]{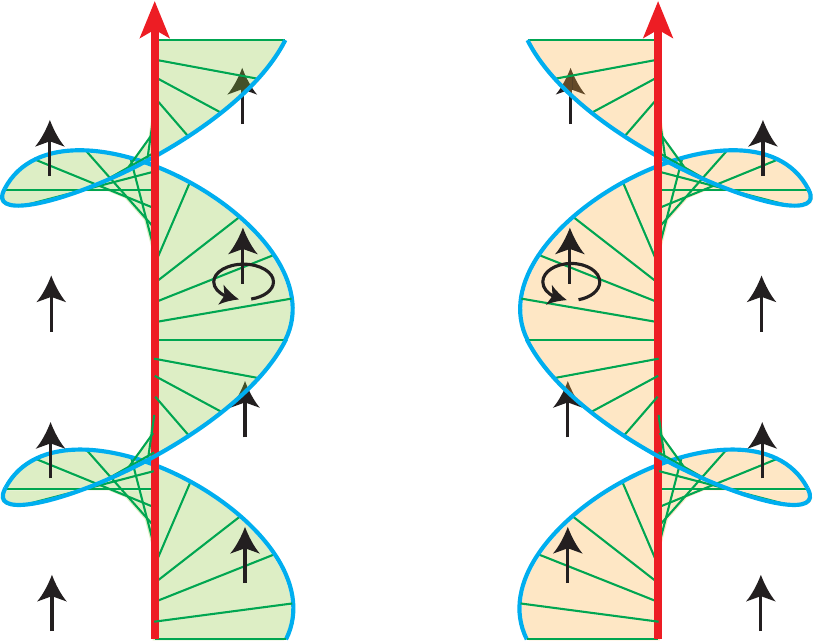}}
  \put(7,-37){\emph{negative}}
  \put(35,-37){\emph{positive}}
  \label{F:Boundary}
  \end{picture}
\end{minipage}

\smallskip

The connection with global cross sections comes from the following remark :
starting from a non-singular flow~$X$ on a compact 3-manifold~$M$ with no boundary, and given a finite collection~$\beta$ of periodic orbits of~$X$, one can consider the \term{normal blow-up} of~$M$ along~$\beta$, denoted by~$M_\beta$.
It is obtained from $M$ by removing the 1-submanifold~$\beta$ and replacing it by its unit normal bundle~$\nu^1_X(\beta)$.
In this construction, each component of~$\beta$ is replaced by a torus.
If $X$ is of class $C^1$, it extends to~$\nu^1_X(\beta)$ via its differential, so that $M_\beta$ is equipped with a continuous vector field~$X_\beta$.

Now if $S$ is a global cross section for~$(M_\beta, (\varphi_{X_\beta}^t)_{t\in\RR})$, one can change it by an isotopy in an arbitrarily small neighborhood of~$ \partial M_\beta$, so that every boundary component of~$\partial S$ is
\begin{itemize}
\item either a meridian circle of a boundary torus, that is, the normal bundle to a point~$p\in\beta$,
\item or a longitude of a boundary torus, that is, its projection in~$M$ is an immersion.
\end{itemize}
After such an isotopy, by blowing down the components of~$\partial S$ into orbits of~$X$, we obtain an immersed Birkhoff section for~$(M, (\varphi_X^t)_{t\in\RR})$ whose boundary is in~$\beta$.
Therefore global cross sections for $(M_\beta, (\varphi_{X_\beta}^t)_{t\in\RR})$ up to isotopy induce Birkhoff sections whose boundary is in~$\beta$ up to isotopy fixing the boundary.

Conversely, starting from a Birkhoff section~$S$, one can blow up its boundary and obtain a global cross section on the blown-up 3-manifold.

Therefore, provided one can understand the Schwartzman asymptotic cycles after blowing up a periodic orbit, one can adapt the Schwartzman--Fried Criterion to the existence of Birkhoff sections.
This was done by Fried and even precised by Hryniewicz~\cite[Thm N]{fried1982geometry}, \cite{hryniewicz2020note}, as we now explain. 
In our context of a vector field~$X$ on a 3-manifold $M$ with a specified finite set~$\beta$ of periodic orbits, every $X$-invariant measure can be split into two parts: one that is supported on~$M\setminus\beta$ and then descends to a $X_\beta$-invariant measure on~$M_\beta$, and one part that corresponds to a combination of Dirac linear $X$-invariant measures on the components of~$\beta$.
This second part has to be replaced on~$M_\beta$ by an $X_\beta$-invariant measure on~$\nu^1_X(\beta)$.
Since a flow on a 2-torus is in general not uniquely ergodic, the unit normal bundle~$\nu^1_X(\beta)$ admits several $X_\beta$-invariant measures.
However, a given class~$\sigma$ in~$\H_2(M, \beta; \ZZ)$ induces a class, also denoted by~$\sigma$, in~$\H_2(M_\beta, \partial M_\beta; \ZZ)$.
All asymptotic cycles associated to all $X_\beta$-invariant measures concentrated on~$\nu^1_X(\beta)$ have the same pairing with~$\sigma$, which corresponds to the rotation number of~$X_\beta|_{\nu^1_X(\beta)}$ with respect to the slope induced by~$\partial\sigma$.
We call this pairing the \term{self-linking} of~$\beta$ along~$X$ associated to the framing given by~$\sigma$, and denote it by~$\langle\partial\sigma, \beta^X\rangle_{\nu^1(\beta)}$.

%In order to make the exposition hopefully more clear, we now use the bracket to denote the intersection and the index after the right-bracket reminds the manifolds where the objects live.

%$\slk^\sigma_X(\beta)$.

\begin{theo}[Schwartzman--Fuller--Fried--Hryniewicz]\label{T:Criterion}
Given are a compact 3-manifold~$M$ with no boundary, a non-vanishing vector field $X$ on~$M$, and a finite collection~$\beta$ of periodic orbits of~$X$.
Then a class $\sigma$ in $\H_2(M, \beta; \ZZ)$ contains an embedded Birkhoff section for~$(M, (\varphi_X^t)_{t\in\RR})$ if and only if
\begin{itemize}
\item for every $X$-invariant measure~$\mu$ whose support does not intersect~$\beta$, the corresponding asymptotic cycle~$c_\mu\in\Schw{X}$ satisfies $\langle \sigma, c_\mu\rangle_{(M, \beta)} >0$,
\item for every component~$\beta_i$ of~$\beta$, the boundary of~$\partial \sigma$ travels plus or minus once along~$\beta_i$, and one has~$\langle\partial\sigma, \beta_i^X\rangle_{\nu^1(\beta_i)}>0$.%$\slk^\sigma_X(\beta_i)>0$.
\end{itemize}
\end{theo}

\subsection{Anosov flows}\label{S:Anosov}
Geodesic flows on unit tangent bundles to hyperbolic surfaces are archetypes of transitive Anosov flows~\cite{hadamard1898surfaces,anosov1969geodesic}.
As such, their asymptotic cycles are easier to understand than those of general flows, as we now explain.

\noindent
\begin{minipage}[t]{0.67\textwidth}
\hspace{3mm}
Recall that a flow~$(\varphi_X^t)_{t\in\RR}$ generated by a vector field $X$ on a 3-manifold is of \term{Anosov type} if there are two transverse $\varphi_X$-invariant 2-foliations~$\Fs, \Fu$ on~$M$ that intersect along~$\RR.X$, where $X$ is the generator of the flow, such that $\Fs$ is transversally exponentially contracted by~$\varphi_X^t$ when $t\to+\infty$ and $\Fu$ is transversally exponentially contracted by~$\varphi_X^{t}$ when $t\to-\infty$.\footnotemark
\end{minipage}
\begin{minipage}[t]{0.33\textwidth}
  \begin{picture}(0,0)
  \put(1,-25){\includegraphics[width=40mm]{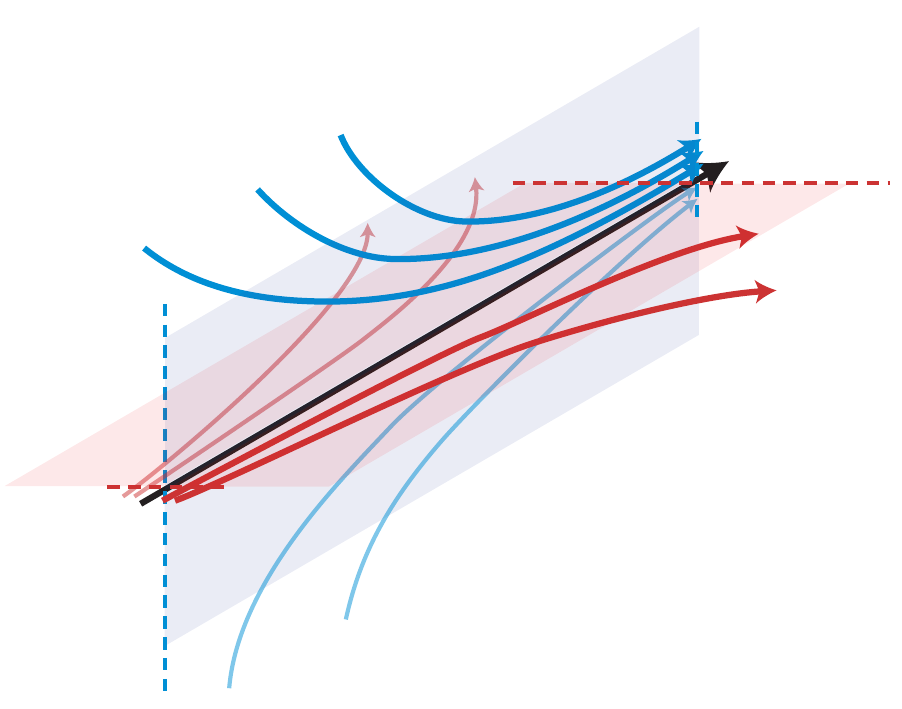}}
  \label{F:AnosovFlow}
  \end{picture}
\end{minipage}
The leaves of~$\Fs$ and~$\Fu$ are called \term{stable} and \term{unstable manifolds} respectively. 
\footnotetext{Actually this definition corresponds to \emph{topologically Anosov} flows, which is enough for us, as the results we use hold for topologically Anosov flows. Note that it was proven by Shannon that transitive topological Anosov flows are topologically equivalent to smooth Anosov flows~\cite{shannon2020dehn}, so that the topological results on transitive smooth Anosov flows can be used for topologically Anosov flows.}

Recall that two flows are \term{orbitally equivalent} if there is a homeomorphism sending the oriented orbits of the first flow onto the oriented orbits of the second one.
The geodesic flow on a hyperbolic surface is of Anosov type~\cite{anosov1969geodesic}.
In particular it is structurally stable, meaning that a small enough perturbation of the generating vector field yields an orbitally equivalent flow.
Together with the connectedness of the space of hyperbolic metrics, this implies that the geodesic flows associated to two different hyperbolic metrics are orbitally equivalent~\cite{gromov1976three}.
This means that, as long as only the topological properties of orbits are involved,
the geodesic flows of all possible hyperbolic metrics on a given surface are equivalent.

Blowing-up some periodic orbits of an Anosov flow does not yield an Anosov flow.
However it preserves the pseudo-Anosov character, so we rather work in this context.

Consider the unit disc~$\DD^2$ in~$\CC$.
For any integer $k \ge 3$ consider the singular 1-foliation~$\FF^1_k$ on~$\DD^2$ given by $d(\Re(z^{k/2}))=0$, and denote by $\FF^2_k$ the singular 2-foliation $\FF^1_k\times (0,1)$ on~$\DD^2\times(0,1)$.
The leaf~$\{0\}\times (0,1)$ is singular.
Also consider the half-unit disc~$\UU^2 = \DD^2\cap \{\Im(z)>0\}$.
Consider the singular 1-foliation $\FF^1_\partial$ on $\UU^2$ given by $d(\Re(s)\cdot\Im(z))=0$, and denote by $\FF^2_\partial$ the singular 2-foliation $\FF^2_\partial\times (0,1)$ on~$\UU^2\times(0,1)$.
The leaf~$\{0\}\times (0,1)$ is also singular.

Given a compact 3-manifold $M$ with toric boundary, a \term{foliation with circle-prongs} of $M$ is a 2-foliation with singularities~$\FF$ of~$M$ locally modelled on a standard 2-foliation or on some $\FF^2_k$ in the interior of~$M$, and on a standard 2-foliation tangent to~$\partial M$ or on~$\FF^2_\partial$ along~$\partial M$, see Figure~\ref{F:CircleProngs}.

\begin{figure}[htb]
  \begin{picture}(100,33)
  \put(0,2){\includegraphics[width=100mm]{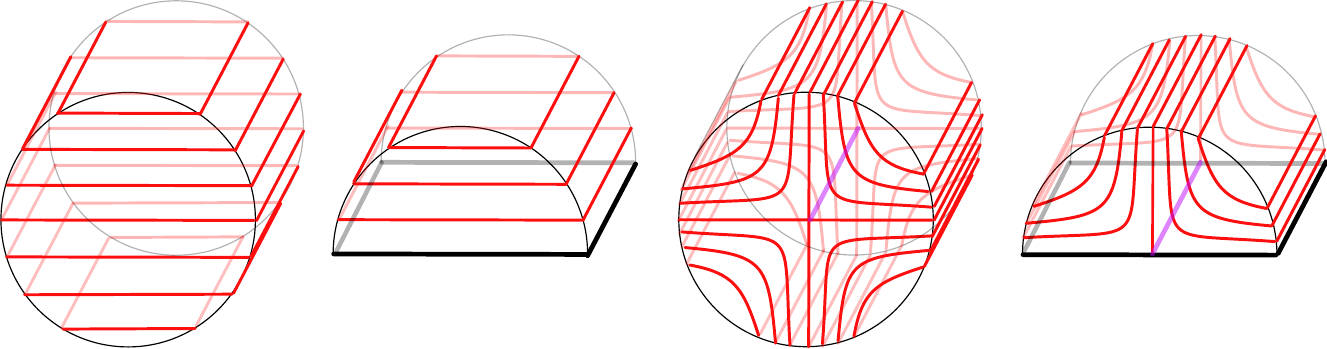}}
  \put(18,0){(a)}
  \put(43,0){(b)}
  \put(70,0){(c)}
  \put(95,0){(d)}
  \end{picture}
  \caption{\small The local picture of a standard 2-foliation in the interior of a 3-manifold (a) and on the boundary (b). The local picture of the foliation with circle-prong~$\FF^2_4$ (c) and the local picture of~$\FF^2_\partial$ (d).}
  \label{F:CircleProngs}
\end{figure}

A flow~$(\varphi^t)_{t\in\RR}$ on~$M$ is of \term{pseudo-Anosov type} if there are two $(\varphi^t)_{t\in\RR}$-invariant foliations with circle-prongs~$\Fs, \Fu$ on~$M$ that are transverse to each other and intersect along~$\RR.X$ (except along the singular curves which are common, and parallel to $X$), where $X$ is the generator of the flow, such that $\Fs$ is transversally exponentially contracted by~$\varphi^t$ when $t\to+\infty$ and $\Fu$ is transversally exponentially contracted by~$\varphi^{t}$ when $t\to-\infty$.\footnote{Pseudo-Anosov flows correspond to the \emph{expansive flows} of Brunella~\cite{brunella1992expansive}. Thanks to results of Inaba--Matsumoto and Paternain, both notions coincide, as explained in Brunella's thesis.}
Note that the pseudo-Anosov flows we consider in the sequel are obtained by blowing up periodic orbits of Anosov flows.
Hence the circle-prongs of the blown-up foliations are only of type~$\FF^2_\partial$; the types~$\FF^2_k$ with~$k \ge 3$ do not appear in our context.

Recall that a flow is \term{transitive} if it has a dense orbit.
Geodesic flows on hyperbolic surfaces are transitive.
Brunella showed that transitive pseudo-Anosov flows admit finite Markov partitions~\cite[Thm 2.1]{brunella1992expansive}.
Earlier Fried showed that the cone generated by the asymptotic cycles of a flow admitting a finite Markov partition is easy to describe~\cite[Thm H]{fried1982geometry} :

\begin{theo}[Fried]\label{T:Markov}
Given a compact 3-manifold $M$ with toric boundary and a non-vanishing vector field~$X$ on~$M$ tangent to~$\partial M$ that generates a flow~$(\varphi_X^t)_{t\in\RR}$ admitting a finite Markov partition, then there is a finite collection~$\{\beta_1, \dots, \beta_n\}$ of periodic orbits of~$(\varphi_X^t)_{t\in\RR}$ such that $\RR_+.\Schw{X} = \Conv(\{\RR_+[\beta_i]\}_{i=1, \dots, n})$.
\end{theo}

Combining the above statement with the existence criterion of Theorem~\ref{T:Criterion}, in the case of geodesic flows we obtain the following result.

\begin{coro}\label{T:CriterionBirkhoff}
Given a hyperbolic surface~$\Sigma$ and $\vec\beta$ a signed collection of periodic orbits of~$\fgeodt$ on~$\U\Sigma$, a class $\sigma$ in~$\H_2(\U\Sigma, \vec\beta; \ZZ)$ such that $\partial \sigma = \vec\beta$ contains a Birkhoff section for~$\fgeodt$ if, and only if,
\begin{itemize}
\item for every periodic orbit~$\vec\alpha$ of~$\fgeodt$ not in~$\vec\beta$, one has $\langle \sigma, [\vec\alpha]\rangle_{(\U\Sigma, \vec\beta)}>0$,
\item for every component~$\vec\beta_i$ of~$\vec\beta$, one has~$\langle\partial\sigma, \vec\beta_i^{X_{\mathrm{geod}}}\rangle_{\nu^1(\vec\beta_i)}>0$.
\end{itemize}
\end{coro}

Actually, Theorem~\ref{T:Markov} states that the infinite set of all periodic orbits in the first item could be replaced by a finite one, but determining this finite set for every signed collection~$\vec\beta$ does not look trivial to us.

\subsection{Classes of surfaces with given boundary}\label{S:Affine}

We come back to the setting of Theorem~\ref{T:Classification}: $\Sigma$ is a negatively curved surface, $\gamma$ is a finite collection of periodic geodesics and~$\vecgamma$ denotes the symmetric lift of~$\gamma$.
In order to apply Schwartzman--Fuller--Fried--Hryniewiez's criterion in the form of Corollary~\ref{T:CriterionBirkhoff} for finding Birkhoff cross sections bounded by~$-\vecgamma$, we need to work in the complement~$\U\Sigma\setminus~\vecgamma$ and in particular to determine the space~$\HSgammaZ$.
In this section we explain that the homology classes of 2-chains bounded by~$-\vecgamma$ form an affine space and we give a canonical origin to this space.

\begin{lemma}\label{L:Affine}
The homology classes of those 2-chains whose boundary is~$-\vecgamma$ form an affine space directed by~$\H_1(\Sigma; \ZZ)$.
\end{lemma}

\begin{proof}
First we consider the sequence~$0\to\H_2(\U\Sigma; \ZZ)\xrightarrow{i}\HSgammaZ \xrightarrow{\partial} \H_1(\vecgamma; \ZZ)$, where the first map is the inclusion map and the second is the boundary map.
We claim that it is exact\footnote{An erroneous version of this statement is in~\cite[Lemma 6]{fried1982geometry}, where it is claimed that the boundary map is surjective and admits a section. It is not true in general, unless $\U\Sigma$ is a homology sphere.}.
Indeed this is a part of the long exact sequence associated to the pair~$(\U\Sigma, \vecgamma)$, see~\cite[Thm 2.16]{hatcher2002algebraic}, plus the fact that~$\H_2(\vecgamma; \ZZ)$ is zero.

Now the homology classes of those 2-chains whose boundary is~$-\vecgamma$ correspond to the preimages under~$\partial$ of the point~$(-1, -1, \dots, -1)\in\H_1(\vecgamma; \ZZ)\simeq \ZZ^{2|\gamma|}$.
Indeed, given two 2-chains with the same boundary, their difference induces a class in~$\H_2(\U\Sigma; \ZZ)$.
Using the fact that~$\U\Sigma$ is a circle bundle with non-zero Euler class, we get~$\H_2(\U\Sigma; \ZZ)\simeq \H_1(\Sigma; \ZZ)$: a non-trivial class in $\H_2(\U\Sigma; \ZZ)$ can be represented by the set of the fibers over a cycle in~$\H_1(\Sigma; \ZZ)$.
\end{proof}

From Lemma~\ref{L:Affine} we deduce that if we are given an explicit 2-chain~$S\!_0$ bounded by~$-\vecgamma$, the classes of the other 2-chains bounded by~$-\vecgamma$ differ from~$[S\!_0]$ by a class in~$\H_1(\Sigma; \ZZ)$.
In our context, there is a natural choice of such an origin~$S_0$, for which the computation of the intersection numbers with asymptotic cycles of the geodesic flow will be easy.

We denote by~$S^\times_{\pm}$ the rational chain in~$\mathrm{C}_2(\U\Sigma, \vecgamma; \QQ)$ that is half the sum of all elementary rectangles~$R^{e, \eta}$ (see Figure~\ref{F:S0}) and by~$\sigma_\pm$ its homology class in~$\H_2(\U\Sigma, \vecgamma; \QQ)$,
\[S^\times_{\pm} := \frac 12 \sum_{e\in\gamma, \eta_e=\pm} R^{e, \eta_e},\qquad
\sigma_\pm := [S^\times_{\pm}].\]
In other words, we consider the set of all tangent vectors based at points of~$\gamma$.
Remember that every elementary rectangle is cooriented by the geodesic flow, hence oriented.
Therefore, $S^\times_{\pm}$ is also oriented.
Its boundary is then exactly~$-\vecgamma$ (thanks to the $\frac 1 2$ factor).
The 2-chain $S^\times_{\pm}$ is not a surface since the fibers of the double points of~$\gamma$ are singular.
As it is rational the class $\sigma_\pm$ might not be realized by a surface, but $2\sigma_\pm$ is always an integral class\footnote{Actually, $\sigma_\pm$ is realized by a surface if and only if~$[\gamma]_2$, the class of~$\gamma$ in $\H_1(\Sigma; \ZZ/2\ZZ)$, is $0$. In this case, the homology class of Birkhoff's coorientation~$\eta_{B}$ (Example~\ref{Ex:Birkhoff}) is $0$, and~$S^{BB}(\eta_B)$ lies in the class~$\sigma_\pm$.
Also the class $\sigma_\pm$ is equal to $\frac 1 2[\Snu+\Snnu]$ for every Eulerian~$\eta$.
Hence it is always realized as the mean of two surfaces without any assumption on~$[\gamma]_2$.}.

\begin{figure}
\centering
\includegraphics[width=.5\textwidth]{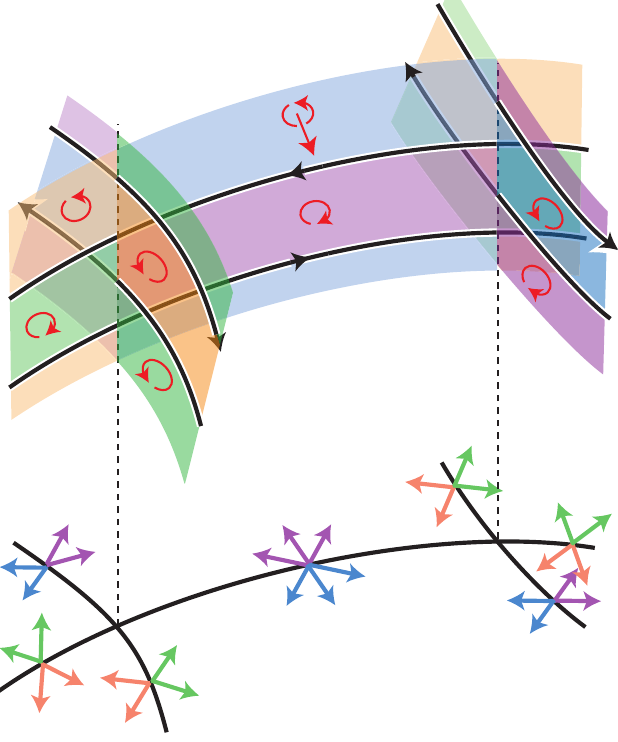}
\caption{\small The 2-chain $S^\times_{\pm}$ is half of the sum of all rectangles~$R^{e, \eta_e}$. It is cooriented by the geodesic flow, hence oriented (in red). Its boundary, taking orientations into account, is~$-\symgamma$. }
\label{F:S0}
\end{figure}

The class~$[S^\times_{\pm}]$ yields a canonical origin to the affine space of those 2-chains bounded by~$-\symgamma$, in the sense that it connects the intersection numbers in~$\U\Sigma_{\vecgamma}$ to intersection numbers of the base surface~$\Sigma$.

\begin{lemma}\label{L:Intersection}
For $\abas$ a collection of oriented periodic geodesics on~$\Sigma$, none of which is a component of~$\gamma$, denote by~$\vec\alpha$ its lift in~$\U\Sigma$.
Then the algebraic intersection~$\langle \sigma_\pm,\vec\alpha\rangle_{(\U\Sigma, \vecgamma)}$ is equal to~$+\frac12\Len_\gamma(\abas)$.
\end{lemma}

This lemma appears in a different form in~\cite{duke2017modular} where it is used to prove that the linking number of two  symmetric collections~${\overset\leftrightarrow{\gamma_1}}, {\overset\leftrightarrow{\gamma_2}}$ in~$\U\Sigma$ is equal to~$-\Len_{\gamma_1}(\gamma_2)$.

\begin{proof}
Since $S^\times_{\pm}$ is positively transverse to the geodesic flow, all intersection points of~$\vec\alpha$ with~$S^\times_{\pm}$ contribute positively to the algebraic intersection.
Since every rectangle has coefficient~$\frac12$  in $S^\times_{\pm}$, the contribution of every intersection point is~$+\frac 12$.
Finally $\vec\alpha$ intersects~$S^\times_{\pm}$ exactly in the fiber of the intersection points of~$\abas$ and~$\gamma$.
\end{proof}

The connection with intersection norms is now straightforward:

\begin{coro}\label{C:norm}
For $\abas$ a collection of oriented periodic geodesics on~$\Sigma$, none of which is a component of~$\gamma$, the intersection~$\langle \sigma_\pm,\vec\alpha\rangle_{(\U\Sigma, \vecgamma)}$ is at least equal to $\frac12\xx([\abas])$, with equality if and only if $\abas$ is an $\xx$-realizing collection of geodesics.
\end{coro}

\subsection{Proofs of Proposition~\ref{T:Brunella} and Theorem~\ref{T:Classification}}
\label{S:Proof}

Let us recall the context: $\Sigma$ is a hyperbolic surface and $\gamma$ a finite collection of periodic orbits on~$\Sigma$.
We denote by~$\vecgamma$ the symmetric lift of~$\gamma$ in~$\U\Sigma$ and by~$\U\Sigma_{\vecgamma}$ the 3-manifold obtained from~$\U\Sigma$ by blowing up the link~$\vecgamma$.
It has toric boundary, and it is equipped with the extension, also denoted by~$\fgeodt$, of the geodesic flow.
The latter is of pseudo-Anosov type (see Section~\ref{S:Anosov}).

Denote by~$\pi_*$ the canonical projection from $\H_2(\U\Sigma; \RR)$ to~$\H_1(\Sigma; \RR)$.
The next statement is the key property connecting Birkhoff sections and intersection norms.

\begin{lemma}\label{L:PositiveIntersection}
If $\gamma$ is a filling geodesic multicurve on~$\Sigma$, a class~$\sigma\in\HSgammaZ$ intersects positively ({\it resp.} non-negatively) every class~$[\vec\alpha]\in\H_1(\U\Sigma_{\vecgamma}; \ZZ)$ for $\abas$ an oriented periodic geodesic on~$\Sigma$ if and only if the class $\pi_*(\sigma-\sigma_\pm)\in\H_1(\Sigma; \ZZ)$ lies in the interior ({\it resp.} the closure) of~$\frac 12\BBx$.
\end{lemma}

\begin{proof}
For every oriented geodesic~$\abas$ on~$\Sigma$, by Lemma~\ref{L:Intersection}, we have
\begin{eqnarray*}
\big\langle \sigma, \vec\alpha\big\rangle_{(\U\Sigma,{\vecgamma})}
&=& \left\langle \sigma-\sigma_\pm, \vec\alpha\right\rangle_{(\U\Sigma, \vecgamma)} + \left\langle \sigma_\pm, \vec\alpha \right\rangle_{(\U\Sigma, \vecgamma)} \\
&=& \left\langle \sigma-\sigma_\pm, \vec\alpha\right\rangle_{(\U\Sigma, \vecgamma)} +\frac12\Len_\gamma(\abas)\\
&=& \left\langle \pi_*( \sigma-\sigma_\pm), \abas\right\rangle_{\Sigma}+\frac12\Len_\gamma(\abas).
\end{eqnarray*}
Hence $\left\langle \sigma, \vec\alpha\right\rangle_{(\U\Sigma, \vecgamma)}$ is positive if and only if $ -\left\langle \pi_*( \sigma-\sigma_\pm), \abas\right\rangle_{\Sigma}$ is smaller than~$\frac12\Len_\gamma(\abas)$.

Now the term~$ -\left\langle \pi( \sigma-\sigma_\pm), \abas\right\rangle_{\Sigma}$ depends only on the class~$[\abas]\in\H_1(\Sigma; \ZZ)$, while the other term~$+\frac12\Len_\gamma(\abas)$ is larger that~$\frac12\xxx{[\abas]}$, with equality if~$\abas$ is $\xx$-realizing (Corollary~\ref{C:norm}).

We then treat separately the cases $[\abas]\neq0$ and $[\abas]=0$ in~$\H_1(\Sigma; \ZZ)$.

Since there is an $\xx$-realizing geodesic in every non-zero homology class, the inequality \\$ -\left\langle \pi_*( \sigma-\sigma_\pm), \abas\right\rangle_{\Sigma} <+\frac12\Len_\gamma(\abas)$ is true for all non null-homologous geodesics~$\abas$ if and only if the inequality~$-\left\langle \pi_*( \sigma-\sigma_\pm), a\right\rangle_{\Sigma} < \frac 12\xxx{a}$ is true for every non-zero homology class.
In the same way, the inequality $ -\left\langle \pi_*( \sigma-\sigma_\pm), \abas\right\rangle_{\Sigma} \le +\frac12\Len_\gamma(\abas)$ is true for all non null-homologous geodesics~$\abas$ if and only if the inequality~$-\left\langle \pi_*( \sigma-\sigma_\pm), a\right\rangle_{\Sigma} \le \frac 12\xxx{a}$ is true for every non-zero homology class.

If $\abas$ is null-homologous, we have $\frac12\Len_\gamma(\abas)>0$ since the multicurve~$\gamma$ is filling, so that\\ $-\left\langle \pi_*( \sigma-\sigma_\pm), \abas\right\rangle_{\Sigma}=0$.

Summarizing the two previous paragraphs, we find that the class $\sigma$ intersects positively ({\it resp.} non-negatively) the class of every periodic orbit of the geodesic flow (in the complement of~$\symgamma$) if and only if for every class~$a\in\H_1(\Sigma; \ZZ)$ we have the inequality $-\left\langle \pi_*( \sigma-\sigma_\pm), a\right\rangle_{\Sigma} < \frac 12\xxx{a}$  ({\it resp.} $-\left\langle \pi_*( \sigma-\sigma_\pm), a\right\rangle_{\Sigma} \le \frac 12\xxx{a}$), which means exactly that the point~$-\pi_*( \sigma-\sigma_\pm)$ belongs to the interior ({\it resp.} the closure) of~$\frac12\BBx$.
Since the latter is symmetric about the origin, this amounts to $\pi_*( \sigma-\sigma_\pm)$ belonging to the interior ({\it resp.} the closure) of~$\frac12\BBx$.
\end{proof}

We can now assemble all blocks and prove our main results.

\begin{proof}[Proof of Theorem~\ref{T:Brunella}]
For~$\eta$ an Eulerian coorientation, we consider the Birkoff--Brunella surface $\Snu$ given by Definition~\ref{D:Snu}.
By Proposition~\ref{P:Transverse} its interior is transverse to the orbits of the geodesic flow in~$\U\Sigma$ while its boundary consists (with orientation) of~$-\symgamma$.
One checks that every elementary rectangle~$R^{e, \eta}$ contributes to~$-1$ to the Euler characteristics, hence~$\chi(\Snu)$ is $-|E(\gamma)|$.
Since~$\gamma$ is seen as a graph of degree~$4$, one has~$|E(\gamma)|=2|V(\gamma)|$, so that $\chi(\Snu)=-2|V(\gamma)|$.

Now if two Eulerian coorientantions~$\eta_1$ and $\eta_2$ are cohomologous, then the class~$[\Snuun-\Snudeux]\in\H_2(\U\Sigma; \ZZ)$ projects by~$\pi$ onto~$[\eta_1-\eta_2]=0$.
Since $\pi_*$ is actually an isomorphism we have $[\Snuun-\Snudeux]=0$, which in turn implies $[\Snuun]=[\Snudeux]$ in~$\HSgammaZ$.

Finally, if~$\Snuun$ and $\Snudeux$ are both Birkhoff sections of~$\fgeodt$ and are homologous, one can blow-up their boundary components (which are the same orbits), and Proposition~\ref{P:isotopy} claims that the flow actually realizes an isotopy between the blown-up surfaces. By blowing down, we obtain the desired isotopy away from the boundary.
\end{proof}

\begin{proof}[Proof of Theorem~\ref{T:Classification}]
Given a hyperbolic surface~$\Sigma$ and a geodesic multicurve~$\gamma$ that fills~$\Sigma$, Definition~\ref{D:Snu} yields a map that associates to every Eulerian coorientation $\eta$ of~$\gamma$ a surface~$\Snu$ bounded by~$-\vecgamma$ and whose interior is transverse to~$\fgeodt$.
Moreover, Proposition~\ref{T:Brunella} states that if two Eulerian coorientations~$\eta_1, \eta_2$ are cohomologous and the surfaces~$\Snuun, \Snudeux$ are Birkhoff sections for~$\fgeodt$, then they are actually isotopic along the flow.
Therefore the map~$S^{BB}$ projects to an injective map~$[S^{BB}]$ that takes a cohomology class of Eulerian coorientations to an isotopy class of surfaces transverse to~$\fgeodt$.

Lemma~\ref{L:Affine} claims that the homology classes of (rational) 2-chains bounded by~$-\vecgamma$ form an affine space directed by~$\H_1(\Sigma; \QQ)$.
The class~$\sigma_\pm$ defined in~\ref{S:Affine} gives a canonical origin to this space.
It is a half-integral class, and its double~$2\sigma_\pm$ is congruent to~$[\gamma]_2$ mod 2.
Therefore the set~$2\H_1(\Sigma; \ZZ)$ of the doubles of all integral classes corresponds to the sublattice of~$\H_1(\Sigma; \ZZ)$ of those points congruent to~$[\gamma]_2$ mod 2.

Theorem~\ref{T:Coor} states that all classes~$[\eta]$ for~$\eta$ Eulerian belong to the closure of~$\BBx$, and every integral point in~$\BBx$ that is congruent to~$[\gamma]_2$ mod~$2$ is realized by the class of an Eulerian coorientation.
This means that the domain of~$[S^{BB}]$ is exactly the integral classes in~$\BBx$ that are congruent to~$[\gamma]_2$ mod~$2$.

What remains to prove is that the restriction of~$[S^{BB}]$ to the interior of~$\BBx$ has its image in the realm of isotopy classes of Birkhoff sections, and that it is surjective.

By Schwartzman--Fuller--Fried-Hryniewicz's criterion in the form of Corollary~\ref{T:CriterionBirkhoff}, a class~$\sigma\in\H_2(\U\Sigma, \vecgamma; \ZZ)$ whose boundary is~$[-\vec\gamma]$ contains a Birkhoff cross section if and only if it pairs negatively with all classes~$\vec\alpha$ of periodic orbits of~$\fgeodt$, plus it links negatively with all boundary components (the $>0$ in~Corollary~\ref{T:CriterionBirkhoff} are all replaced by~$<0$ because of the signs of all boundary components).

By Lemma~\ref{L:PositiveIntersection} the first condition is equivalent to the difference~$\pi_*(\sigma-\sigma_\pm)$ lying inside~$\frac12\BBx$, or equivalently to~$2\pi_*(\sigma-\sigma_\pm)$ lying inside~$\BBx$.

Concerning the second condition in~\ref{T:CriterionBirkhoff}, one has to check that, if $\eta$ is an Eulerian coorientation such that~$[\eta]$ lies inside~$\BBx$, for every component~$\vec\gamma_i$ of~$\symgamma$, one has
$\langle\partial[S^{BB}(\eta)], \vec\gamma_i^{X_{\mathrm{geod}}}\rangle_{\nu^1(\vec\gamma_i)}>0$.
As explained just before Theorem~\ref{T:Criterion}, $\vec\gamma_i^{X_{\mathrm{geod}}}$ denotes any $X_{\mathrm{geod}}$-invariant measure in the boundary component of the blow-up of~$\vec\gamma_i$. 
One such invariant measure is carried by the trace of the stable manifold of~$\vec\gamma_i$, so that one only has to prove that 
$\partial\Snu\cap\nu^1(\vec\gamma_i)$ intersects the trace of the stable manifold of~$\vec\gamma_i$. 

Let us work by contrapositive and assume that~$\partial\Snu\cap\nu^1(\vec\gamma_i)$ does not intersect the stable manifold of~$\vec\gamma_i$.
Consider all double points that are met when travelling along the oriented curve~$\gbas$ on~$\Sigma$.
If at least one of them is alternating for~$\eta$ then one sees on Figure~\ref{F:BirSurface} that~$\Snu$ rotates from top to bottom (or bottom to top), so that it intersects the stable manifold of~$\vec\gamma_i$ in a neighbourhood of the considered double point.
Therefore~$\gbas$ has only vertices that are non-alternating for~$\eta$.
Moreover, looking at Figure~\ref{F:BruSurface}, one sees that at every vertex, the coorientation of the transverse component must be opposite to that of~$\gbas$.
Therefore the pairing~$\eta(\gbas)$ is~$0$, yielding~$[\eta]([\gbas])=0$, and so~$[\eta]$ does not lie in the interior of~$\BBx$.

The two previous paragraphs imply that the restriction of~$[S^{BB}]$ to the interior of~$\BBx$ has its image in the realm of isotopy classes of Birkhoff sections, and that it is surjective, thus concluding the proof.
\end{proof}

One may wonder what happens in Theorem~\ref{T:Classification} when~$\gamma$ is not filling.\footnote{In a previous version of this article, it was claimed that Theorem~\ref{T:Classification} also holds in this case. This is false, as was noted by T. Marty.}
In this case, there exists at least one geodesic~$\alpha$ not intersecting~$\gamma$.
The two oriented lifts of~$\alpha$ yield two periodic orbits~$\vec \alpha$ and~$\arev$ of~$\fgeodt$.
These two lifts are anti-isotopic in the complement of~$\symgamma$ : the isotopy obtained by rotating the tangent vectors by an angle from~$0$ to~$\pi$ transports $\vec \alpha$ to~$-\arev$.
This implies that a surface cannot be positively transverse to~$\vec \alpha$ and~$\arev$ simultaneously.
Therefore~$-\symgamma$ bounds no Birkhoff section.
However, the dual unit ball~$\BBx$ may or may not contain integral points in its interior, depending on~$\gamma$.
So there is no simple extension of Theorem~\ref{T:Classification} when $\gamma$ is not filling, except by saying that $-\symgamma$ cannot bound a Birkhoff section.

\section{Extension to orientable 2-orbifolds}

We explain here how the results extend to 2-dimensional orbifolds.
Actually Proposition~\ref{T:NORM}, Theorem~\ref{T:Coor} and Proposition~\ref{T:Brunella} extend directly.
The only point that is not straightforward is Theorem~\ref{T:Classification} which requires an additional argument.

\begin{defi}[chap.13 of \cite{thurston1980geometry}]\label{D:orbifold}
A \term{Riemannian orientable 2-dimensional orbifold}~$\OO$ is given by an orientable topological surface~$\Sigma_\OO$ together with an atlas $(U_\alpha, \varphi_\alpha)_{\alpha\in A}$ of charts of the form $\varphi_\alpha:U_\alpha\to D_\alpha/(\ZZ/k_\alpha\ZZ)$, with $D_\alpha$ a 2-dimensional Riemannian disc on which $\ZZ/k_\alpha\ZZ$ acts by rotations, and such that the chart transition maps $\varphi_\alpha\circ\varphi_\beta^{-1}$ are isometries.
\end{defi}

Actually the orbifolds to which our theorems extend are the hyperbolic ones.
Such a 2-orbifold is always \emph{good} in the sense of Thurston, namely it is a quotient of a hyperbolic surface by a finite automorphism group.

For our purpose we define the \term{first homology group} $\H_1(\OO; \RR)$ to be simply $\H_1(\Sigma_\OO; \RR)$.
Then the definition of intersection norms extends directly and Proposition~\ref{T:NORM} and Theorem~\ref{T:Coor} hold.

We now turn to Proposition~\ref{T:Brunella} and Theorem~\ref{T:Classification}.
First we have to define unit tangent bundles to orbifolds and geodesic flows.
If $D$ is a Riemannian disc on which $\ZZ/k\ZZ$ acts by rotation (with a fixed point), then $\ZZ/k\ZZ$ also acts on the unit tanget bundle~$\U D$.
The action on~$\U D$ is free, since the vectors tangent to the fixed point are rotated.
Hence the quotient $\U D/(\ZZ/k\ZZ)$ is a 3-manifold (actually it is a solid torus).

\begin{defi}\label{D:UOrbifold}
Given a Riemannian orientable 2-orbifold~$\OO=(\Sigma_\OO, (U_\alpha, \varphi_\alpha)_{\alpha\in A})$, its \dupeterm{unit tangent bundle} is the 3-manifold $\U\OO$ defined by the atlas $(\hat U_\alpha, \hat \varphi_\alpha)_{\alpha\in A}$, where $\hat U_\alpha =\U U_\alpha$ and $\hat \varphi_\alpha(x,v) = (\varphi_\alpha(x), d(\varphi_\alpha)_x(v))$.
It is equipped with a canonical projection $\pi:\U\OO\to\OO$.
If $\OO$ is of the form $\Sigma/\Gamma$ for some hyperbolic surface~$\Sigma$, then $\U\OO$ is simply the quotient~$(\U\Sigma)/\Gamma$.
The \dupeterm{geodesic flow} on~$\U\OO$ is defined as in the non-singular case by $\fgeod^t(\gamma(0), \dot\gamma(0))=(\gamma(t), \dot\gamma(t))$, where $\gamma$ is any geodesic with speed 1.
\end{defi}

With these definitions, the constructions of Section~\ref{S:Construction} (the BB-surface~$\Snu$ associated to an Eulerian coorientation) can be transposed and Lemmas~\ref{L:Snu}, \ref{L:Affine}, and \ref{L:Intersection} remain true.

Now, for $\OO$ a hyperbolic 2-orbifold, the unit tangent bundle $\U\OO$ is a 3-manifold, and we have $\H_2(\U\OO; \RR)\simeq\H_1(\OO; \RR)$.
Indeed closed curves in~$\Sigma_\OO$ lift by $\pi^{-1}$ to closed surfaces in~$\U\OO$.
The fact that the unit tangent to a conic disc~$D/(\ZZ/k\ZZ)$ is a torus whose core is the singular fiber implies that cohomologous curves lift to cohomologous surfaces, so that $\pi^{-1}$ induces a well defined map $\pi^{-1}_*:\H_1(\OO; \RR)\to \H_2(\U\OO; \RR)$.
The orbifold Euler characteristics of~$\OO$ is negative by hyperbolicity, so that the Euler number of~$\U\OO$ (as a Seifert fibered space) is also negative, hence the map $\pi^{-1}_*$ is an isomorphism.

Now Corollary~\ref{C:norm} holds, but Lemma~\ref{L:PositiveIntersection} needs to be adapted.
Firstly remark that if $\Sigma_\OO$ is a homology sphere, $\xx$ is the zero-function, so there is no possible interesting version of Lemma~\ref{L:PositiveIntersection} in this case.
Secondly, if $\Sigma_\OO$ is not a homology sphere, Lemma~\ref{L:PositiveIntersection} holds, but one argument needs to be developed, namely:

\begin{lemma}\label{L:OrbMin}
For $\OO$ a Riemannian orientable 2-orbifold and $\gamma$ a geodesic  on~$\OO$, for every non-zero homology class $a$ in $\H_2(\OO; \RR)$, there is an $\xx$-realizing geodesic in~$a$.
\end{lemma}

\begin{proof}
Let $\beta$ be an $\xx$-realizing curve such that $[\beta]=a$.
As in the case of a standard surface we want to strenghten $\beta$ to make it geodesic without changing the geometric intersection with~$\gamma$.
Far from the conic points, one can perform isotopies that shorten~$\beta$ with respect to the hyperbolic metric .
Since $\gamma$ is geodesic, these isotopies cannot increase the number of intersection (that is, no Reidemeister-II move is involved).

Around a conic point, one can work in a local conic chart.
This amounts to work on a standard disc where everything in invariant under a rotation.
Then one can also perform length-decreasing isotopies in an equivariant way, and this does not increase the number of intersection points with~$\gamma$.
\end{proof}

Proposition~\ref{T:Brunella} holds with no modification in the proof, and Theorem~\ref{T:Classification} has to be changed into Theorem~\ref{T:ClassificationBis} in order to treat the case of an orbifold whose underlying surface is a sphere.

\begin{proof}[Proof of Theorem~\ref{T:ClassificationBis}]
Suppose that~$\Sigma_\OO$ is a sphere.
Then $\U\Sigma_\OO$ is a rational homology sphere (in this case, $\H_1(\U\Sigma_\OO; \ZZ)$ is finite, but not reduced to the trivial group, unless $\Sigma_\OO$ is a sphere with three conic points of respective orders $2,3,$ and 7).
If $\gamma$ is filling, then the class~$\sigma_\pm$ intersects every asymptotic cycle, so it contains a Birkhoff section.
Since $\H_2(\U\Sigma_\OO; \ZZ)$ is trivial, all Birkhoff sections are homologous, hence isotopic relatively to their boundary.

If $\gamma$ is not filling, then there exists a geodesic~$\alpha$ not intersecting~$\gamma$ on~$\Sigma_\OO$.
Both its oriented lifts do not intersect~$S^\times_{\pm}$, hence there is an asymptotic cycle whose algebraic intersection with~$\sigma_\pm$ is zero.
Hence the class~$\sigma_\pm$ contains no Birkhoff section.
Since it is the unique class with boundary~$-\vecgamma$, there is no Birkhoff section bounded by~$-\vecgamma$ at all.

Finally if~$\Sigma_\OO$ is not a sphere and~$\gamma$ is filling, the norm~$\xx$ is non-degenerate, and the proof of Theorem~\ref{T:Classification} translates directly.
\end{proof}

\section{Questions}

\noindent{\bf On intersection norms.}
If~$\Sigma$ is a flat torus, then the minimal intersection is always realized by geodesics, which are unique in their homology class.
Hence if the collection~$\gamma$ is the union of $k$ geodesics $\gamma_1, \dots, \gamma_k$, then $i_\gamma(\alpha)=\sum_{i=1}^k i_{\gamma_i}(\alpha)$.
This implies that the dual ball~$B^*_{\gamma}$ coincides with the Minkowski sum $B^*_{\gamma_1}+\cdots+B^*_{\gamma_k}$.
Since the segment~$[-1, 1]\times\{0\}\subset\RR^2$ is the dual unit ball~$\BBx$ for~$\gamma$ the vertical circle on the torus, every segment containing~$0$ in the middle is the dual unit ball of some closed circle on the torus.
Therefore every convex polygon in~$\RR^2$ whose vertices are integral and congruent mod $2$ is of the form~$\BBx$ for some~$\gamma$.
This was already remarked by Thurston~\cite{thurston1986norm} and by Schrijver~\cite{schrijver1993graphs}.
In higher dimension the situation is probably more intricate.

\begin{ques}
Which polyhedra of~$\RR^{2g}$ with integer vertices can be realized as the dual unit ball~$\BBx$ for some~$\gamma$ in~$\Sigma_g$?
\end{ques}

A partial answer is given by Abdoul Karim Sane~\cite{sane2020intersection} who proves that some polyhedra in~$\RR^4$ cannot be dual unit ball of any intersection norm on a genus 2-surface.

Also, if~$\Sigma$ is a torus and $\gamma$ is a union of geodesics, then the above remarks imply that the number of self-intersection points of~$\gamma$ is exactly $1/4$ of the area of~$\BBx$ (check on Figure~\ref{F:Intro}).
Is there an analog statement in higher genus?

\begin{ques}
Which information concerning~$\gamma$ can be read on~$\BBx$? Is the number of self-intersection points of~$\gamma$ a certain function defined on~$\BBx$?
\end{ques}

This information is interesting since this number is exactly the opposite of the Euler characteristic of every Birkhoff cross section bounded by~$\vecgamma$.
Note that the number of self-intersection points is homogenous of degree~2, so we should look for degree~2 functions on polyhedra in~$\RR^{2g}$:
does it correspond to some symplectic capacity?

Motivated by our application we only defined the intersection norm for a collection of immersed curves, but one can directly extend it for an arbitrary embedded graph.
One can wonder which properties extend to this case and which information on the embedded graphs are encoded in this norm.
For example when the graph is Eulerian (\emph{i.e.,} all vertices have even degree) the connection with Eulerian coorientations remains.

\bigskip

\noindent{\bf On Birkhoff cross sections.}
Our constructions and our classification result deal only with Birkhoff cross sections bounded by a \emph{symmetric} collection of periodic orbits of the geodesic flow, that is, invariant under the involution~$(p,v)\mapsto(p,-v)$.
However the only restriction \emph{a priori} for being the boundary of a Birkhoff cross section is to be a boundary, that is, to be null-homologous.
Our results here say nothing about the classification, or even the existence, of Birkhoff cross sections with arbitrary null-homologous boundary.
In this case, the theory of Schwartzman--Fuller--Thurston--Fried and the remarks of Sections~\ref{S:SSF} and~\ref{S:Affine} still apply, so that these sections still correspond to the point inside a certain polytope in~$\H^1(\Sigma; \RR)$.
However we have no analog for the coorientations and the explicit constructions derived from them.

\begin{ques}
Is there a natural generalization of the polytope~$\BBx$ to non-symmetric finite collections~$\vec\gamma$ of closed orbits of the geodesic flow~$\fgeodt$, so that integer points in this polytope classify surfaces bounded by~$\vec\gamma$ and transverse to~$\fgeodt$?
\end{ques}

In the case of the flat torus, this question is answered in~\cite[Thm 3.12]{dehornoy2015geodesic} where a polygon~$P_{\vec\gamma}$ classifying transverse surfaces bounded by~$\vec\gamma$ is defined for \emph{every} null-homologous collection~$\vec\gamma$.

What would probably unlock the situation in the higher genus case would be to have, for every null-homologous collection~$\vec\gamma$, \emph{one} explicit surface bounded by~$\vec\gamma$ (not necessarily transverse), that is, an analog of $\sigma_\pm$ when $\vec\gamma$ is not symmetric.
Such an explicit point allows to compute its intersection with every other periodic orbit~$\vec\alpha$ of~$\fgeodt$.
These intersection numbers are all we need in order to describe explicitly the asymptotic directions of~$\fgeodt$ in~$\U\Sigma\setminus\vec\gamma$.
Generalising the constructions of~\cite{dehornoy2015genus} is a possibility here.

More generally, one can wonder whether there exists a generalization to all flows of the intersection norm~$\xx$ in the following sense:

\begin{ques}
For every 3-dimensional flow~$X$, is there an object that describes all isotopy classes of Birkhoff cross sections?
\end{ques}

A starting point would be to try with an Anosov flow that is not the geodesic flow, and see whether Gauss linking forms could play this role~\cite{ghys2009right}.

\section{Appendix: Thurston's theorem on integral seminorms}
\label{S:integral_seminorms}

Our goal in this section is to state and prove Thurston's theorem \cite[Thm.~2]{thurston1986norm} affirming that every integral seminorm $F$ defined on a lattice $L \simeq \ZZ^n$ is the pointwise maximum of a finite set $\Phi$ of linear functionals (i.e. homomorphisms $L \to \ZZ$).
In addition, we strengthen the conclusion of the theorem in the case that $F$ is equivalent modulo an integer $m \geq 1$ to a given homomorphism $\mu:L \to \ZZ_m$, affirming that in this case we can let $\Phi$ contain only homomorphisms that are equivalent to $\mu$ modulo $m$ as well.

Note that other proofs of Thurston's theorem have been given~\cite[Thm. 5]{fried1979fibrations}, \cite{delasalle2016norms}.
Here, we state the theorem in a way that involves only integer numbers (rather than reals, although the version for real numbers follows as a corollary).
The proof we give is similar to Thurston's original argument, but is written in greater detail and, like the statement of the theorem, relies only on the lattice and its dual, rather than extending the seminorm to a real vector space.
In fact, the proof yields an effective method for obtaining, for each integral seminorm $F$ and each vector $v$, a functional $\varphi \leq F$ that coincides with $F$ at $v$.

To facilitate the exposition we introduce the concept of a \term{narrow set} with respect to an integral seminorm $F$, which is any finite subset $X \subseteq V$ such that $F$ is linear %(i.e. it coincides with a group homomorphism $L \to \ZZ$)
on the semigroup spanned by~$X$.

\subsection{Definitions and statement of Thurston's theorem}

Recall that a \dupeterm{lattice} $L$ is a finitely generated free abelian group.
Its \dupeterm{dual lattice} $L^*$ is the group of homomorphisms $L \to \ZZ$.
Note that $L \simeq L^* \simeq \ZZ^n$ for some $n \in \NN$, called the \term{rank} of $L$.
The elements of $L$ and $L^*$ will be called \term{vectors} and \term{functionals}, respectively.
A \term{basis} of a lattice $L$ is an $n$-tuple $X=(x_i)_{i < n} \subseteq L$ such that every element of $L$ can be expressed by a unique integral combination of the elements $x_i$.

An \term{integral seminorm} on a lattice $L$ is a function $F:L \to \ZZ$ with the following two properties:
\begin{itemize}
\item \term{positive homogeneity}:
$F(\lambda \,v) = \lambda \, F(v)$ for all $v \in L$ and all scalars $\lambda \in \NN$,
\item \term{subadditivity}: $F(v+w) \leq F(v) + F(w)$ for all $v$,~$w \in L$.
\end{itemize}
(Note that we allow $F(-v) \neq F(v)$, and even $F(v) < 0$.)

Note first that every finite nonempty set of functionals $\Phi \subseteq L^*$ determines a integral seminorm $M_\Phi$ on $L$ given by
\begin{equation}\label{eq:seminorm_presentation}
M_\Phi(v) = \max_{\varphi \in \Phi} \varphi(v).
\end{equation}
Thurston's theorem asserts that in fact every integral seminorm is of this form.

\begin{theo}[Thurston's theorem on integral seminorms]
\label{thm:thurston}
Every integral seminorm $F$ on a lattice $L$ is of the form
\begin{equation}\label{eq:thurston}
F(v) = \max_{\varphi \in B^*_F} \varphi( v )
\end{equation}
where $B^*_F \subseteq L^*$ is the \term{dual unit ball} of $F$, that is, the set of all functionals $\varphi \in L^*$ satisfying $\varphi(v) \leq F(v)$ for all $v \in L$.
\end{theo}

\begin{remark}
The dual unit ball of any integral seminorm $F$ is finite, since the coefficients of a functional $\varphi \in B^*_F$ with respect to basis $E = (e_i)_{0 \leq i< n}$ are bounded by
$ \varphi_i = \varphi( e_i) \leq F( e_i)$
and 
$-\varphi_i = \varphi(-e_i) \leq F(-e_i)$.
\end{remark}

% \begin{remark}
% An alternative version of Thurston's theorem affirms that every integral seminorm $F$ on a lattice $L$ is of the form $M_\Phi$ for some finite set $\Phi \subseteq L^*$.
% However, it is clear that we must choose $\Phi \subseteq B_F^*$ to ensure that $M_\Phi \leq F$, and since a larger set $\Phi$ yields larger values for the seminorm $M_\Phi$, we see that choosing $\Phi = B_F^*$ we obtain the largest seminorm $M_\Phi$ that is $\leq F$.
% Thus, if the equation $M_\Phi = F$ holds for \emph{some} set $\Phi$ of integral linear functionals, it must hold for the set $\Phi = B^*_F$.
% Therefore this apparently weaker result is equivalent to Theorem~\ref{thm:thurston}.
% \end{remark}

\begin{remark}\label{rmk:extremals_suffice}
For any finite set $\Phi$ of functionals on a lattice $L$ we have
\begin{equation}
\label{eq:extremals_suffice}
M_\Phi = M_{\ext(\Phi)},
\end{equation}
where $\ext(\Phi)$ denotes the set of \term{extremal points} of the set $\Phi$, that is, those points $\varphi \in \Phi$ that cannot be obtained as a (rational) convex combination of the elements of $\Phi\setminus\{\phi\}$.
Equation \eqref{eq:extremals_suffice} holds since every point of $\Phi$ is a convex combination of the extremal points of $\Phi$.
In particular, equation \eqref{eq:thurston} in Thurston's theorem is equivalent to
\begin{equation}\label{eq:thurston_extremals}
 F(v) = \max_{\varphi \in \ext(B^*_F)} \varphi( v ).
\end{equation}
\end{remark}

\begin{remark}\label{rmk:thurston_real}
The more commonly formulated version of Thurston's theorem, involving real numbers, is as follows.
A \term{seminorm} on a real vector space $V$ is a subadditive, positively homogeneous function $F:V \to \RR$, where positive homogeneity means that $F(\lambda \, v) = \lambda \, F(v)$ for all vectors $v \in \RR^n$ and scalars $\lambda \in \RR_{\geq 0}$.
Its \dupeterm{dual unit ball} $B^*_F$ is the set of (real-valued) functionals $\varphi \in V^*$ that satisfy $\varphi \leq F$ pointwise.
In this setting, Thurston's theorem asserts that any real seminorm $F$ on a vector space $V \simeq \RR^n$ taking integer values on some rank-$n$ lattice $L \subseteq V$ is of the form
\[ F(v) = \max_{\varphi\in \Phi} \varphi(v) \]
where $\Phi$ is the set of linear functionals $\varphi \in B^*_F$ that take integer values on $L$.
This version of Thurston's theorem follows readily from Theorem~\ref{thm:thurston}.
\end{remark}

\subsection{Proof of Thurston's theorem}

The proof is based on a method for verifying that a functional $\varphi$ on a lattice $L$ is in the dual unit ball of an integral seminorm $F$ after evaluating both functions at finitely many vectors.
To describe this method, we introduce the concept of \emph{narrow sets}.

To define this concept, we first recall some additional standard terminology.
Let~$L$ be a rank-$n$ lattice.
A \term{sublattice} of $L$ is a subgroup of $L$ (and is itself a lattice of rank $\leq n$ by the Smith normal form theorem),
and a \term{semigroup} in $L$ is any subset $S \subseteq L$ that is closed with respect to finite sums (including the empty sum).
Any set $X \subseteq L$ spans a sublattice $L_X$ and a semigroup $S_X$ consisting, respectively, of all integral or positive (i.e. non-negative) integral combinations of elements of $X$.

Now we can define narrow sets.
Note that, in essence, what we are trying to show in Thurston's theorem is that every integral seminorm is a piecewise-linear function.

\begin{defi}
A subset $X$ of a lattice $L$ is \term{narrow} with respect to a seminorm $F$ defined on $L$, or $F$-narrow, if on the semigroup $S_X$ spanned by $X$ the function $F$ is linear, that is, it coincides with some functional $\varphi \in L_X^*$.
\end{defi}

Note that there is at most one functional $\varphi \in L_X^*$ that coincides with $F$ on $X$, and in fact, exactly one if $X$ is linearly independent.
To determine whether $F$ coincides with $\varphi$ on the whole semigroup $S_X$ we have the following criterion.

\begin{prop}[Interior ray test]
\label{thm:interior_ray}
Consider a seminorm $F$ on a lattice $L$,
a vector tuple $X=(x_i)_{i\in k} \subseteq L$,
and a functional $\varphi \in L_X^*$ that is greater than or equal to $F$ at the vectors $x_i$ (and thus, at all vectors of the semigroup $S_X$).
Take an integer combination $c^+=\sum_i\alpha_ix_i$ with strictly positive coefficients $\alpha_i >0$, so that
\begin{equation}
\label{eq:interior_vector_ineq}
F( c^+ ) = F \left( \sum_i \alpha_i x_i \right)
\leq \sum_i \alpha_i F( x_i )
= \varphi( c^+ ).
\end{equation}
Then the following are equivalent:
\begin{enumerate}[(a)]
\item\label{it:at_vector} $F (c^+) \geq \varphi(c^+)$,
\item\label{it:on_space} $F \geq \varphi$ on the lattice $L_X$,
\item\label{it:equality} $F = \varphi$ on the semigroup $S_X$ (and hence $X$ is $F$-narrow).
\end{enumerate}
\end{prop}

% Note that $\varphi$ is integer-valued on the lattice $L_X$ generated by $P$, therefore if $L_X=\ZZ^n$ (which requires, of course, that $k = n$), then narrowness of $P$ implies that $\varphi \in B^*_F$.

The name of this result stems from the fact that the ray spanned by $c^+$ lies in the interior of the cone of positive combinations of the vectors $x_i$ (in the rational vector space $L_X \otimes_\ZZ \QQ$).
Proposition~\ref{thm:interior_ray} ensures that the function $F$ coincides with the linear function $\varphi$ on the whole cone if it coincides along this single ray.

\begin{proof}
We need just show that \ref{it:at_vector} implies \ref{it:on_space}, since the other forward implications are evident.
Suppose, then, that \ref{it:at_vector} holds, and take any vector $v \in L_X$.
We have to show that $F(v) \geq \varphi(v)$, and we do so as follows.

Recall the picture of the interior ray described above.
Since the ray spanned by~$c^+$ is in the interior of the cone spanned by $X$, it follows that there exists a vector $c \in S_X$ such that the ray spanned by~$c^+$ lies between those spanned by $c$ and by $v$.
More precisely, the sum $v+c$ is a positive multiple of $c^+$.

\begin{claim}
There exists a vector $c \in S_X$ and a number $\lambda \in \NN$ such that $v + c = \lambda c^+$.
\end{claim}

\begin{proof}[Proof of claim]
Recall that $c^+ = \sum_i \alpha_i x_i$ for some strictly positive integers $\alpha_i>0$, and since $v \in L_X$, we can also write $v = \sum_i \beta_i x_i$ using integers $\beta_i$.
Take a number $\lambda \in \NN$ such that $\lambda \alpha_i \geq \beta_i$ for all $i$.
Then we have $\lambda c^+ = v + c$ where
$c = \sum_i (\lambda \alpha_i - \beta_i) x_i$ is a vector of $S_X$ since $\lambda \alpha_i - \beta_i \geq 0$ for all $i$.
\end{proof}

Since~$F$ is subadditive and~$\varphi$ is additive, from the equation~$\lambda c^+ = c + v$ we infer that if the inequality~$F \leq \varphi$ holds at $c$ (which it does since $c \in S_X$) and also holds strictly at~$v$ (let us assume this, for a contradiction), then it also holds strictly at the vector~$\lambda c^+$.
However, we know that~$F(\lambda c^+) \geq \varphi(\lambda c^+)$ by the hypothesis~\ref{it:at_vector}. % and the homogeneity of~$F$ and~$\varphi$.
Therefore the inequality~$F \leq \varphi$ cannot hold strictly at~$v$,
which means that~$F(v) \geq \varphi(v)$, as we had to show.
\end{proof}

% The interior ray test has the following version not involving a functional.
% \begin{prop}
% \label{thm:interior_ray_no_functional}
% Consider a seminorm $F$ on a lattice $L$, a linearly independent vector tuple $X=(x_i)_{i\in k} \subseteq L$, and strictly positive coefficients $(\alpha_i>0)_{i \in k}$.
% % Take an integer combination $c^+ = \sum_i \alpha_i x_i$ with strictly positive coefficients $\alpha_i >0$.
% Then $X$ is $F$-narrow if and only if equality holds in the inequality
% \begin{equation}
% % F( c^+ ) =
% F \left( \sum_i \alpha_i x_i \right)
% \leq \sum_i \alpha_i F( x_i ).
% \end{equation}
% \end{prop}
% \begin{proof} Follows from Proposition~\ref{thm:interior_ray} applied to the unique functional $\varphi \in L_x^*$ that coincides with $F$ at the vectors $x_i$.
% \end{proof}

Now we are ready to prove Thurston's theorem.
Let $F$ be an integral seminorm on a lattice $L$, and take a vector $v \in L$.
We have to show that there exists a functional $\varphi\in B^*_F$ such that $\varphi(v) = F(v)$.
And for this, we may assume that $v$ is a \term{primitive element} of $L$ (that is, that it cannot be written as a multiple $v = \lambda w$ of an element $w \in L$ by an integer $\lambda>1$), since every element of a lattice is a positive multiple of some primitive element (again, by the Smith normal form theorem).

To prove that $F(v) = \varphi(v)$ for some $\varphi\in B^*_F$, it suffices to show that the vector $v$ is contained in the semigroup $S_X$ generated by some narrow basis $X$ of $L$, because this means that $F$ coincides with some functional $\varphi \in L^*$ on $S_X$, and this functional is in the dual unit ball $B^*_F$ by Proposition~\ref{thm:interior_ray}.
Therefore, to finish the proof of Thurston's theorem, it is enough to establish the following result.

\begin{prop}\label{thm:narrow_basis} If $F$ is an integral seminorm on a lattice $L$ then every primitive vector $v \in L$ is the first element of some $F$-narrow basis.
\end{prop}

The proof is constructive: if the integral seminorm $F$ is given as an oracle (or ``black box'') that outputs the value $F(v)$ for any given input vector $v \in L$, we will show how to obtain, after invoking this oracle finitely many times, an $F$-narrow basis $X$ containing $v$ and the corresponding functional $\varphi \in L^*$ that coincides with~$F$ on $S_x$, and therefore satisfies $\varphi(v) = F(v)$, and is in the dual unit ball $B^*_F$.

\begin{proof}[Proof of Theorem~\ref{thm:thurston}]
Let $X = (x_i)_{0 \leq i < n}$ be a basis of the lattice $L$ such that $x_0 = v$.
(Every primitive integral vector is part of a basis, which can be obtained by putting in Smith normal form the one-column matrix of coordinates of $v$ with respect to an initial arbitrary basis of $L$.)

In general the basis $X$ is not narrow, but we can modify it to make it narrow as follows.
We proceed by induction on the dimension.
Suppose that for some $k < n$, the $k$-tuple $X_k = (x_i)_{0 \leq i < k}$ is known to be narrow.
We may test whether $X_{k+1}$ is narrow by evaluating $F$ on the vector $x_k' := x_k + w$, where $w = \sum_{0 \leq i < k} x_i$.
Note that
\begin{equation}
\label{eq:increment_inequality}
F(x_k') \leq F(x_k) + F(w).
\end{equation}

\begin{claim}[increment test]
\label{thm:increment_test}
$X_{k+1}$ is narrow if and only if equality holds in \eqref{eq:increment_inequality}.
\end{claim}

\begin{proof}[Proof of claim]
Let $\varphi$ be the unique functional on the lattice $L_{X_{k+1}}$ that coincides with $F$ on $X_{k+1}$.
The vector $x_k'$ is the sum of all the vectors of $X_{k+1}$, thus, by Proposition~\ref{thm:interior_ray}, $X_{k+1}$ is narrow if and only if the inequality
$F(x_k') \leq \varphi(x_k')$ is an equality.
However, this inequality is equivalent to \eqref{eq:increment_inequality} since $\varphi(x_k') = \varphi(x_k) + \varphi(w) = F(x_k) + F(w)$.
(Here we used the equation $\varphi(w) = F(w)$, which holds because $w$ is a combination of the tuple $X_k$, that is assumed narrow.)
\end{proof}

If the increment test is not passed, we replace the vector $x_k$ in $X$ by the vector~$x_k'$, obtaining a new basis $X'$, and we redo the test.
(Note that this replacement is an elementary operation on $X$, therefore $X'$ is another basis of $L$.)
Since we may need to repeat this replacement many times, we denote $x_k^{(t)} = x_k + t w$, for $t \in \NN$ and we denote $X^{(t)}$ the basis obtained from $X$ by replacing $x_k$ by $x_k^{(t)}$.

\begin{claim}
For a large enough $t \in \NN$, the tuple $X^{(t)}_{k+1}$ is narrow.
\end{claim}

\begin{proof}[Proof of claim]
By the increment test described in Claim~\ref{thm:increment_test} above, it suffices to show that the inequality
\[ F(x_k^{(t+1)}) \leq F(x_k^{(t)}) + F(w) \]
is an equality for large enough $t$.
To prove this we consider the function
\[ f(t) = F(x_k^{(t)}) = F(x_k + t \, w). \]
Its discrete derivative $f'(t) := f(t+1)-f(t)$ is integer-valued, increasing (since~$f$ is convex), and bounded above by $F(w)$.
Therefore $f'$ eventually stabilizes at a constant value.
In fact, it stabilizes at the value $F(w)$.
We see this by comparing $f$ with the known function $g(t) = F(t w) = t F(w)$, whose difference with $f$ is bounded by the inequality
\[ g(t)
= F(x_k + t \, w - x_k )
\leq F( x_k + t \, w) + F( -x_k )
= f(t) + F( -x_k ). \qedhere \]
\end{proof}

By this process we find a narrow basis $X$ containing $v$ as its first element.
By Proposition~\ref{thm:interior_ray}, it follows that the unique functional $\varphi \in L^*$ that coincides with $F$ on $X$ is in the dual unit ball $B^*_F$ and satisfies $\varphi(v) = F(v)$, as we had to show.
\end{proof}

\subsection{Thurston's theorem for seminorms of a given class modulo $m$}

For an integer $m \geq 1$, denote $\ZZ_m$ the group of integers modulo $m$, and let $\pi_m:\ZZ \to \ZZ_m$ be the quotient map.
An integer-valued function $F$ on a lattice $L$ is \term{congruent} to a group homomorphism $\mu: L \to \ZZ_m$ if the function
$F_{\mmod m}:= \pi_m \circ F $
is equal to $\mu$.

Our goal now is to prove the following extension of Thurston's theorem.

\begin{theo}\label{thm:thurston_congruent}
Every integral seminorm $F$ on a lattice $L$ that is congruent modulo a certain integer $m \geq 1$ to a given homorphism $\mu: L \to \ZZ_m$ is of the form
\[ F(v) =
\max_{\substack{\varphi \in B^*_F\\\varphi_{\mmod m} = \mu}} \varphi( v ). \]
\end{theo}

To prove this result we use the following lemma.

\begin{lemma}\label{thm:extremal_dual_basis}
Let $F$ be an integral seminorm on a lattice $L$, and let $\varphi$ be an extremal functional of the dual unit ball $B^*_F$.
Then there exists a basis $X$ of $L$ such that $F$ coincides with $\varphi$ on the semigroup $S_X$.
\end{lemma}

\begin{proof}
Let $(\psi_i)_i$ be the functionals of the dual unit ball $B^*_F$ excluding $\varphi$.

We claim first that there exists some primitive vector $v\in L$ such that
$\psi_i(v) < \varphi(v)$ for all $i$.
Indeed, extremality of the functional $\varphi$ implies that it cannot be written as a rational convex combination of the functionals $\psi_i$.
By the Farkas lemma, it follows that there is a vector $v \in L$ (which can be taken primitive) such that $\varphi(v) > \psi_i(v)$ for all $i$.

Fixed the vector $v \in L$, we apply Proposition~\ref{thm:narrow_basis}, which ensures that the lattice~$L$ admits an $F$-narrow basis $X$ containing the vector $v$.
Since $X$ is $F$-narrow, the function $F$ coincides with some functional $\wt\varphi \in L^*$ on the semigroup $S_X$
(and in particular, at the vector $v$).
Moreover, this functional $\wt\varphi$ is in the dual unit ball $B^*_F$ by Proposition~\ref{thm:interior_ray}.
We conclude that $\wt\varphi = \varphi$ because $\varphi$ is strictly greater than all the other functionals $\psi_i \in B^*_F$ at the vector $v$.
\end{proof}

To finish, let us prove Theorem~\ref{thm:thurston_congruent}.

\begin{proof}[Proof of Theorem~\ref{thm:thurston_congruent}]
By Remark~\ref{rmk:extremals_suffice}, it suffices to show that every extremal point of the dual unit ball $B^*_F$ is congruent to $\mu$ modulo $m$.
Let $\varphi$ be an extremal functional of $B^*_F$.
By Lemma~\ref{thm:extremal_dual_basis}, there exists a basis $X$ of $L$ such that $F = \varphi$ on~$S_X$.
Suppose, for a contradiction, that $\varphi_{\mmod m} \neq \mu$.
This means that there is a vector $ v \in L$ such that
$ \varphi_{\mmod m}(v) \neq \mu(v) $.
Thus for each vector $v'$ of the set $ v + m L$ we have
\[ \varphi(v') \equiv \varphi(v) \not\equiv \mu(v) \equiv \mu(v') \mod m \]
since both $ \mu $ and $ \varphi_{\mmod m} $ vanish on the lattice $ m L$.
Take a vector $v' \in ( v + m L )$ whose coordinates with respect to the basis $X$ are positive, so that $v' \in S_X$, and hence we have
\[ F(v') = \varphi(v') \not\equiv \mu(v') \mod m, \]
contradicting the hypothesis $F_{\mmod m} = \mu$.
% The contradiction stems from assuming that $\varphi_{\mmod m} \neq \mu$.
% This shows that every extremal functional of the dual unit ball $B^*_F$ coincides with $\mu$ modulo $m$. %, as required to prove Theorem~\ref{thm:thurston_congruent}.
\end{proof}

\printbibliography

\bigskip
\noindent
{\footnotesize%
  \textsc{Marcos Cossarini}\par {Univ Paris-Est Créteil, CNRS, LAMA, Créteil, France} \par
  \url{marcos.cossarini@u-pec.fr} \par
}

\bigskip
\noindent
{\footnotesize%
  \textsc{Pierre Dehornoy}\par {Aix Marseille Univ, CNRS, I2M, Marseille, France} \par
  \url{pierre.dehornoy@univ-amu.fr} \par
  \url{https://www.i2m.univ-amu.fr/perso/pierre.dehornoy/}\par
}

\end{document}